\newcolumntype{C}{>{$}c<{$}}
\newcolumntype{L}{>{$}l<{$}}
\theoremstyle{plain}
\newtheorem{theo}{Theorem}[section]
\newtheorem{lemma}[theo]{Lemma}
\newtheorem{conj}[theo]{Conjecture}
\newtheorem{claim}[theo]{Claim}
\theoremstyle{definition}
\newtheorem{defn}[theo]{Definition}
\newcommand{\ra}{\rightarrow}
\newenvironment{proofclaim}{\removelastskip\penalty55\medskip\noindent{\it Proof of the claim. }}
\newcommand{\qedclaim}{\hfill\scalebox{0.6}{$\blacksquare$}\\}
\newcommand\floor[1]{\left\lfloor#1\right\rfloor}
\newcommand\ceil[1]{\left\lceil#1\right\rceil}
\title{A note on improved bounds for hypergraph rainbow matching problems}
\author{Candida Bowtell, Andrea Freschi, Gal Kronenberg, Jun Yan}
\thanks{CB: School of Mathematics, University of Birmingham, Edgbaston, Birmingham, United Kingdom, \href{mailto:c.bowtell@bham.ac.uk}{\tt c.bowtell@bham.ac.uk}, supported by Leverhulme Trust Early Career Fellowship ECF--2023--393.\\
AF: HUN-REN, Alfr\' ed R\'enyi Institute of Mathematics, Budapest, Hungary, \href{mailto:freschi.andrea@renyi.hu}{\tt freschi.andrea@renyi.hu}, research partially supported by ERC Advanced Grants ``GeoScape'', no. 882971 and ``ERMiD'', no. 101054936.\\
GK: Mathematical Institute, University of Oxford, United Kingdom,
\href{mailto:kronenberg@maths.ox.ac.uk}{\tt kronenberg@maths.ox.ac.uk}, supported by the Royal Commission for
the Exhibition of 1851.\\ JY: Mathematics Institute, University of Warwick, Coventry, United Kingdom, \href{mailto:jun.yan@warwick.ac.uk}{\tt jun.yan@warwick.ac.uk}, supported by the Warwick Mathematics Institute CDT, and by funding from the UK EPSRC (Grant number: EP/W523793/1)}
\begin{document}

\date{}

\begin{abstract} 
A natural question, inspired by the famous Ryser-Brualdi-Stein Conjecture, is to determine the largest positive integer $g(r,n)$ such that every collection of $n$ matchings, each of size~$n$, in an $r$-partite $r$-uniform hypergraph contains a {\it rainbow matching} of size~$g(r,n)$.
The parameter $g'(r,n)$ is defined identically with the exception that the host hypergraph is not required to be $r$-partite.

In this note, we improve the best known lower bounds on $g'(r,n)$ for all $r \geq 4$ and the upper bounds on $g(r,n)$ for all $r \geq 3$, provided $n$ is sufficiently large.
More precisely, we show that if $r\ge3$ then
$$\frac{2n}{r+1}-\Theta_r(1)\le g'(r,n)\le g(r,n)\le n-\Theta_r(n^{1-\frac{1}{r}}).$$ 
Interestingly, while it has been conjectured that $g(2,n)=g'(2,n)=n-1$, our results show that if~$r\ge3$ then $g(r,n)$ and $g'(r,n)$ are bounded away from $n$ by a function which grows in~$n$.

We also prove analogous bounds for the related problem where we are interested in the smallest size $s$ for which any collection of $n$ matchings of size $s$ in an ($r$-partite) $r$-uniform hypergraph contains a rainbow matching of size $n$.
\end{abstract}

\maketitle

\section{Introduction}

Let $M_1,\dots,M_n$ be a collection of (not necessarily disjoint) matchings in a hypergraph. 
A matching $M\subset \bigcup_{i=1}^n M_i$ is called \textit{rainbow} if there exists an injection $\phi:M\to[n]$ such that for every edge $e$ in $M$, $e$ is also an edge in $M_{\phi(e)}$. 
Equivalently, one can view $[n]$ as a list of~$n$ colours and the edges in $M_i$ as being colourable with colour~$i$.
In this language, a matching $M\subset \bigcup_{i=1}^n M_i$ is rainbow if its edges can be coloured using different colours.

The famous Ryser-Brualdi-Stein Conjecture claims that 
every decomposition of the edges of the complete bipartite graph $K_{n,n}$ into $n$ edge-disjoint matchings of size $n$, $M_1, \ldots, M_n$,
admits a rainbow matching $M \subset \bigcup_{i \in [n]} M_i$ of size $n-1$ when $n$ is even, and a rainbow matching of size $n$ when $n$ is odd. 
This conjecture has received a huge amount of attention with the best bound being a recent breakthrough of Montgomery~\cite{mont}, who proved that one can always find a rainbow matching of size at least $n-1$ provided $n$ is sufficiently large. 
Note that the $n-1$ term in the even case of the Ryser-Brualdi-Stein Conjecture is best possible as shown by examples coming from Cayley tables of certain abelian groups such as $\mathbb{Z}_{n}$, where $n$ is even.
 
The study of generalisations of this problem has also gained significant traction in recent years. 
The new results of this note are concerned with some generalisations of the problem to hypergraphs, but it is helpful to first discuss the state of the art in the graph setting. 
In \cite{ab}, Aharoni and Berger conjectured the following generalisation of the Ryser-Brualdi-Stein Conjecture.

\begin{conj}\label{conj:ab_bip}
Every collection of $n$ matchings of size $n$ in a bipartite graph admits a rainbow matching of size $n-1$.
\end{conj}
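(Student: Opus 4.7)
The plan is to follow the general architecture of Montgomery's recent proof of the Ryser--Brualdi--Stein conjecture and adapt it to this more general hypothesis: build a robust absorber, find a rainbow matching covering most colours by a random greedy procedure, and use the absorber to incorporate the leftover colours, losing at most one. The two new features here compared to RBS are that the host bipartite graph $G$ can be arbitrary and that the matchings $M_1,\dots,M_n$ need not be edge-disjoint or form a decomposition of $G$.

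First, I would normalise the input. Passing to $G[\bigcup_i V(M_i)]$, both parts of $G$ have size $\Theta(n)$. Then I would try to reach a quasirandom subcollection by iteratively discarding each vertex that is saturated by too few $M_i$ together with each colour whose matching becomes too small after this deletion. A double-counting argument should make this lose only $o(n)$ colours and vertices, which can be re-inserted at the very end of the argument, and leave every surviving vertex in a linear number of surviving matchings.

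The core of the argument is the absorption. Reserve a random set $C\sub[n]$ of roughly $\eps n$ colours and, on a small reserved vertex set, build a rainbow absorber $R_0$ with the property that for any sufficiently small $C'\sub C$ left over at the end, $R_0$ can be rerouted to a rainbow matching using precisely the colours $C'$ on the same vertex set. On the remaining $(1-\eps)n$ colours, run a random greedy rainbow matching process, at each step selecting a uniformly random available edge of the current colour disjoint from the partial matching and from the reserved vertices. Using Freedman's inequality to track the number of valid edges in each matching as the process evolves, one should show that all but $o(n)$ colours get covered. These remaining colours are then absorbed by $R_0$ via short rainbow augmenting paths.

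The main obstacle, and the reason the conjecture has resisted attack for over a decade, is constructing the absorber $R_0$. Montgomery's construction crucially exploits the Latin square structure, where each colour class is a perfect matching of $K_{n,n}$; this gives very rigid local behaviour and allows one to build ``switchers'' exchanging any two colours at any pair of vertices. In the Aharoni--Berger setting two matchings can share many edges, or a matching can be concentrated on a small vertex set, both of which destroy such switchers. A promising route is a structural dichotomy: either the system is sufficiently quasirandom that a variant of Montgomery's absorber works (after the cleaning step above), or else it is structured enough that one can exhibit a large rainbow matching directly by an inductive/algebraic argument on the non-quasirandom part. Turning this dichotomy into a working absorber, and guaranteeing its compatibility with the random greedy phase, is where I expect the proof to succeed or fail.
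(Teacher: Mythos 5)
This statement is an open conjecture in the paper (Conjecture~\ref{conj:ab_bip}, due to Aharoni and Berger); the paper offers no proof of it, and the best known result in this direction is a rainbow matching of size $n-\sqrt{n}$ (see Table~\ref{table:1}). Your submission is therefore not being compared against an existing argument, and it must stand on its own --- which it does not. What you have written is a research programme, not a proof: you yourself flag that the construction of the absorber $R_0$ is ``where I expect the proof to succeed or fail,'' and you do not construct it. The proposed ``structural dichotomy'' (quasirandom vs.\ structured) is stated as a hope, with neither branch carried out; in particular the structured branch, where you would need to ``exhibit a large rainbow matching directly by an inductive/algebraic argument,'' is precisely the kind of step that has blocked progress on this conjecture, since matchings that share many edges (e.g.\ the $C_{2n}$ extremal example with $n-1$ identical matchings) are exactly the configurations where no switcher-based absorption is available.

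There is a second, independent gap in the cleaning step. You discard $o(n)$ colours and vertices and assert they ``can be re-inserted at the very end of the argument,'' but the whole difficulty of reaching $n-1$ (as opposed to $n-o(n)$, which is already known) is that one can afford to lose only a single colour in total. Re-incorporating $o(n)$ previously discarded colours into an almost-spanning rainbow matching is not a routine augmentation: each reinsertion may require displacing edges of the current matching, and nothing in your sketch controls the cascade. Even granting a working absorber for the reserved colour set $C$, the discarded colours lie outside $C$ and are not covered by it. As it stands, your outline at best re-derives the known asymptotic bound $n-o(n)$ and does not close the gap to $n-1$.
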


Crucially, unlike the Ryser-Brualdi-Stein Conjecture, the $n$ matchings in Conjecture~\ref{conj:ab_bip} do not need to be edge-disjoint, nor do they all necessarily cover the same set of vertices.  
Conjecture~\ref{conj:ab_bip} was further extended by dropping the requirement that the host graph is bipartite.
This version of the conjecture, still attributed to Aharoni and Berger, was first stated in~\cite{abcz}.

\begin{conj}\label{conj:ab}
Every collection of $n$ matchings of size $n$ in a graph admits a rainbow matching of size $n-1$.
\end{conj}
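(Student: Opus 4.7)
The plan is to extend Montgomery's recent bipartite proof~\cite{mont} to the non-bipartite setting, via the familiar two-step recipe: build a near-perfect rainbow matching via a semi-random process, then close the small remaining gap with a carefully reserved absorber. Fix a small constant $\eps>0$ and partition the colour set $[n]$ into a \emph{main} part $A$ of size $(1-\eps)n$ and a \emph{reserve} part $B$ of size $\eps n$.

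First, on the matchings $\{M_i : i\in A\}$ run a semi-random rainbow greedy process, at each step selecting a uniformly random edge of a still-available matching that avoids already-used vertices. The aim is a partial rainbow matching $R$ missing only $\polylog(n)$ colours, with pseudorandom control: each vertex is uncovered with small probability, and these events are near-independent. Standard martingale/Freedman-type concentration arguments, as in Montgomery's argument, should give the required distributional guarantees. Second, reserve a structure $R_B$ using only colours in $B$, together with a family of \emph{switchers}: short rainbow alternating substructures of $R_B$ with the property that, for any small set $S$ of still-unused colours and any small set of exposed vertices compatible with the output of Step~1, one switcher can be locally toggled to absorb $S$ and gain one edge per colour of~$S$.

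The crucial new ingredient, absent from the bipartite case, is that a rainbow augmenting substructure in a general graph may close into an \emph{odd cycle}. This is the classical obstruction separating bipartite from non-bipartite matching, and it is the heart of the problem here. The idea is to adapt Edmonds' blossom mechanism to the rainbow setting: whenever a switcher search runs into an odd cycle, either reroute it through a different part of $R_B$, or contract the blossom while tracking the used colours in an auxiliary gadget that refunds a colour upon expansion. Guaranteeing that $R_B$ contains enough disjoint, rainbow, blossom-resolvable switchers, and that each such switcher can be activated independently, is the dominant technical difficulty; it will presumably require an additional absorbing layer to control parity information alongside the colour budget.

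Finally, combine $R$ with the absorber to upgrade to a rainbow matching of size $n-1$; the fact that one colour is allowed to be dropped matches the extremal Cayley-table obstructions for even $n$, so there is no parity issue at the outer level. The main obstacle I expect is precisely the \emph{rainbow blossom} step: Montgomery's bipartite argument leans essentially on short even alternating paths that can be sampled almost greedily, and no known analogue handles odd alternating closures while preserving the rainbow constraint. This is plausibly why Conjecture~\ref{conj:ab} has remained open despite the resolution of the bipartite case, and a complete proof along these lines would hinge on developing such a mechanism; short of this, a natural intermediate goal is to establish the bound $n-1$ either under an extra structural assumption ruling out short odd cycles in $\bigcup_i M_i$, or at the cost of an additive constant depending only on the ``odd-girth'' of the host graph.
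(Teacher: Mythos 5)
There is a fundamental mismatch here: the statement you are addressing is a \emph{conjecture} of Aharoni and Berger, and the paper does not prove it --- it is stated as Conjecture~\ref{conj:ab} precisely because it remains open. The strongest known result in this direction (see Table~\ref{table:1}) gives only a rainbow matching of size $n-20n^{7/8}$, i.e.\ the weak asymptotic version, so a complete proof of the exact $n-1$ bound would be a major breakthrough rather than something that can be recovered by assembling known ingredients. Your text is explicitly a research plan rather than a proof: you yourself identify the ``rainbow blossom'' step as unresolved and end by proposing weaker intermediate goals. As written, no step of the argument is actually carried out, so there is no proof to verify --- there is a genuine gap at essentially every stage, and most acutely at the absorption step in the non-bipartite setting.

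Beyond the admitted gap, the starting point is also shakier than you suggest. Montgomery's theorem applies to the Ryser--Brualdi--Stein setting, where the $n$ matchings are edge-disjoint and decompose $K_{n,n}$; Conjecture~\ref{conj:ab} allows the matchings to be repeated, to overlap arbitrarily, and to live on different vertex sets, so even in the bipartite case (Conjecture~\ref{conj:ab_bip}) one cannot simply ``run Montgomery's argument'' --- that conjecture is also open, with the best bound being $n-\sqrt{n}$. Consequently your two-step recipe would need, before any blossom-type mechanism, a new pseudorandomness and absorption framework that tolerates heavy multiplicities and lack of regularity in the colour classes; the extremal example in the paper (a $C_{2n}$ with one matching repeated $n-1$ times) shows how degenerate the input can be. If you want a tractable contribution in the spirit of this paper, aim at quantitative improvements to the $o(n)$ error terms (as the authors do for the hypergraph analogues via Lemma~\ref{lem:gibounds} and the sampling trick), rather than at the exact conjecture.
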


The term~$n-1$ in Conjectures~\ref{conj:ab_bip} and~\ref{conj:ab} is best possible for all $n$, as shown by the following example. 
Let $M$ and $M'$ be two edge-disjoint matchings of size $n$ which form a cycle $C_{2n}$ on $2n$ vertices.
Let $M_i:=M$ for every $1\le i\le n-1$, and let $M_n:=M'$.
Then $\{M_1,\dots,M_n\}$ is a collection of $n$ matchings of size $n$ in a bipartite graph and there is no rainbow matching of size $n$.
Indeed, any matching of size $n$ in $\bigcup_{i=1}^n M_i = C_{2n}$ is either a copy of $M$ or $M'$, and so its edges cannot receive more than $n-1$ different colours.

Another natural line of inquiry is to determine the smallest positive integer $s$ such that any collection of $n$ matchings of size $s$ admits a rainbow matching of size $n$.
In this direction, Aharoni and Berger \cite{ab} conjectured the following.

\begin{conj}\label{conj:ab_bip_strong}
Every collection of $n$ matchings of size $n+1$ in a bipartite graph admits a rainbow matching of size $n$.
\end{conj}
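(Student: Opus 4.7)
The plan is to attempt an absorption-based proof modeled on Montgomery's resolution of the Ryser-Brualdi-Stein conjecture for large $n$. Fix matchings $M_1,\dots,M_n$ of size $n+1$ in a bipartite graph $G$, and reserve a small palette $C_A\subset [n]$ of size $\eps n$. The first step is to build a rainbow matching $A$ using only colors in $C_A$ with the following \emph{absorbing property}: for any defect set $D\subset[n]\setminus C_A$ with $|D|\le \eps^2 n$, one can modify $A$ by local edge swaps into a rainbow matching on $V(A)$ using the colors $(C_A\cup D)\setminus D'$ for some $D'\subset C_A$ with $|D'|=|D|$. Such an absorber would be assembled from short ``switcher'' gadgets, each capable of exchanging one color for another along a short alternating structure.

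The second step is to apply a semi-random nibble (or random greedy) procedure on the remaining colors $[n]\setminus C_A$, selecting for each such color $i$ an edge of $M_i$ lying outside $V(A)$ so that the chosen edges form a matching. Since $|M_i|=n+1$ and $|V(A)|=O(\eps n)$, each $M_i$ still has at least $(1-O(\eps))n$ candidate edges, which is enough room for standard concentration arguments to leave only a small residual set $D$ of unused colors. Applying the absorbing property of $A$ to $D$ then extends the partial matching to a rainbow matching of total size $n$.

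The principal obstacle is the first step. Existing absorbers for the Ryser-Brualdi-Stein conjecture rely heavily on the fact that the $n$ matchings partition $E(K_{n,n})$, giving uniform per-color degrees at every vertex and an abundance of alternating paths through which switchers can be embedded essentially by counting. In the present setting the matchings may overlap arbitrarily, the host graph is assumed only to be bipartite (no regularity or minimum-degree hypothesis), and the slack of $1$ edge per matching leaves almost nothing that can be discarded during any pre-processing phase. A successful attack would therefore likely require either a reduction to a quasi-random sub-instance preserving almost all matchings, or a genuinely new class of absorber robust to arbitrary overlap patterns between matchings; it is this gap that, to the best of my knowledge, has kept the conjecture open despite the asymptotic progress obtained by showing that matchings of size $n+o(n)$ suffice.
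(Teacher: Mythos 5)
The statement you are addressing is not a theorem of this paper but an open conjecture of Aharoni and Berger (Conjecture~\ref{conj:ab_bip_strong}); the paper states it, explains why the bound $n+1$ would be best possible, and records the strongest known progress, namely that matchings of size $n+o(n)$ suffice (Munh\'{a}-Correia, Pokrovskiy and Sudakov prove that size $n+7n^{3/4}$ is enough in the bipartite setting). There is therefore no proof in the paper to compare yours against, and your submission does not supply one either: it is a strategy outline whose essential step is missing.

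The gap is the one you name yourself, and it is fatal as things stand. Step 1 --- the construction of a rainbow absorber $A$ on the reserved palette $C_A$ --- is never carried out, and the obstacles to carrying it out are structural, not technical. Montgomery's absorbers rely on the matchings decomposing $E(K_{n,n})$, so that every vertex meets every colour exactly once; this regularity is what lets switchers be found by counting alternating structures. In the Aharoni--Berger setting the matchings overlap arbitrarily, the host bipartite graph carries no degree or quasirandomness hypothesis, and the slack is a single edge per matching, so no constant fraction of any $M_i$ can be sacrificed in a preprocessing or nibble phase without already losing more than the allowed defect. Your Step 2 also quietly assumes each colour retains $(1-O(\eps))n$ usable edges after deleting $V(A)$, but since $A$ must absorb $\Theta(\eps^2 n)$ defects it plausibly needs $\Theta(\eps^2 n)$ gadgets of constant size, and controlling how those gadgets intersect the other colour classes is exactly the uniform-degree information you do not have. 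Until an absorber robust to arbitrary overlap patterns is actually constructed (or replaced by a different mechanism), the argument does not close, and the conjecture remains open.
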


Note that Conjecture \ref{conj:ab_bip_strong} implies Conjecture \ref{conj:ab_bip}: given $n$ matchings of size $n$ in a bipartite graph~$G$, one introduces a dummy edge $e$ disjoint from $G$ and adds it to each matching.
This yields $n$ matchings of size $n+1$ in the bipartite graph $G\cup e$.
Applying Conjecture \ref{conj:ab_bip_strong} and removing the dummy edge $e$ implies Conjecture \ref{conj:ab_bip}.
In particular, it follows that the term $n+1$ in Conjecture \ref{conj:ab_bip_strong} is best possible.

For the non-bipartite case, the following conjecture was implicitly made in \cite{abchs} and stated in \cite{grww}.

\begin{conj}\label{conj:ab_strong}
Every collection of $n$ matchings of size $n+2$ in a graph admits a rainbow matching of size $n$.
\end{conj}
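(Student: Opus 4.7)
The plan is to attempt the standard maximum-rainbow-matching-plus-augmentation approach. Suppose for a contradiction that no rainbow matching of size $n$ exists, and let $R=\{e_{1},\dots,e_{k}\}$ be a rainbow matching of maximum size, with $e_j\in M_{c(j)}$ and $c(1),\dots,c(k)$ pairwise distinct; write $U\sub[n]$ for the set of $n-k\ge 1$ unused colours. A first observation is a simple double count: for each $j\in U$, the matching $M_j$ has $n+2$ edges but at most $2k$ of them can meet $V(R)$, so if $2k\le n+1$ then some edge of some $M_j$ is vertex-disjoint from $R$ and yields an immediate augmentation. Hence we may assume $k\ge (n+3)/2$ and that for every unused colour almost all edges of $M_j$ intersect $V(R)$.

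The next step is to encode single swaps: an edge $f\in M_j$ (with $j\in U$) meeting exactly one $e_l\in R$ produces a swap replacing $e_l$ by $f$, yielding a rainbow matching of the same size now using colour $j$ and freeing colour $c(l)$. A chain of such swaps ending with an edge disjoint from the resulting rainbow matching gives an augmenting sequence. When the host graph is bipartite these chains organise themselves as alternating paths, and this is the natural angle of attack on Conjecture~\ref{conj:ab_bip_strong}; in a general graph one additionally has to neutralise \emph{odd} alternating cycles appearing in the swap structure, and the extra ``$+2$'' of Conjecture~\ref{conj:ab_strong} (versus ``$+1$'' in Conjecture~\ref{conj:ab_bip_strong}) should supply the slack needed to absorb a single odd cycle, mirroring the way an extra edge beyond $n-1$ is used to handle parity obstructions in Montgomery's proof~\cite{mont} of the Ryser--Brualdi--Stein conjecture.

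The principal obstacle is, of course, that Conjecture~\ref{conj:ab_strong} is a long-standing open problem: even its bipartite counterpart Conjecture~\ref{conj:ab_bip_strong} remains unproven, and existing results yield only approximate versions, requiring matching sizes of order $cn$ with $c>1$ rather than $n+O(1)$. A complete proof would almost certainly require either a substantial extension of iterative-absorption techniques to the non-bipartite rainbow setting or a new topological Hall-type theorem tuned to odd covers. A more realistic intermediate target would be a conditional reduction: showing that any proof of Conjecture~\ref{conj:ab_bip_strong} transfers automatically to Conjecture~\ref{conj:ab_strong} at the cost of a single extra unit of slack, via the standard device of contracting odd ears and introducing dummy edges to bipartise the problem.
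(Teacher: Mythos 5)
This statement is a conjecture in the paper, not a theorem: the paper offers no proof of it, only the extremal construction (disjoint properly $3$-edge-coloured copies of $K_4$) showing that the value $n+2$ would be best possible, together with pointers to the known asymptotic results. Your proposal, as you yourself acknowledge in its final paragraph, is not a proof either; it is an outline of the standard augmenting strategy followed by an admission that the conjecture remains open. So there is nothing to compare against a paper proof, and the honest verdict is that the argument is incomplete.

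To name the gap concretely: your first reduction (if $2k\le n+1$ then some $M_j$ with $j\in U$ has an edge disjoint from $V(R)$) is fine, but everything after that is aspirational. The ``chain of swaps'' step is exactly where all known approaches stall. A single swap replacing $e_l$ by $f\in M_j$ frees the colour $c(l)$, but you then need an edge of colour $c(l)$ (or of another freed colour) avoiding the \emph{new} rainbow matching, and controlling the interaction of many such swaps simultaneously is the hard combinatorial core; the known techniques (alternating-path arguments, topological Hall-type theorems, iterative absorption) only deliver this when the matchings have size $n+\Theta(n^{\alpha})$ for some $\alpha<1$, as recorded in Table~\ref{table:1} of the paper, not $n+O(1)$. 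Likewise, your claim that the extra ``$+2$'' absorbs a single odd cycle is a heuristic, not an argument: the swap structure can contain many interlocking odd obstructions, and no reduction from the general case to the bipartite case at the cost of one unit of slack is known. If you want a tractable target in the spirit of your sketch, the weak/strong asymptotic equivalence via the sampling trick of Munh\'{a}-Correia, Pokrovskiy and Sudakov (used in Section~\ref{sec_hrn} of this paper) is the right tool, but it inherently loses a polynomial error term and cannot yield the exact $n+2$.
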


To see why the term $n+2$ in Conjecture~\ref{conj:ab_strong} is best possible, consider the following construction for~$n$ odd. 
Take the disjoint union of $(n+1)/2$ copies of a properly edge-coloured~$K_4$ with colours red, green and blue. 
Let $M_1, \ldots, M_{n-2}$ be copies of the matching formed by the red edges.
Let $M_{n-1}$ and $M_n$ be copies of the green and blue matchings respectively. 
Then each $M_i$ is a matching of size $n+1$, but $M_1, \ldots, M_n$ does not admit a rainbow matching of size $n$.

Conjectures \ref{conj:ab_bip}--\ref{conj:ab_strong} have now all been settled asymptotically.
More precisely, it is known that any collection of $n$ matchings of size $n$ in a graph yields a rainbow matching of size $n-o(n)$ (see \cite{bgs,mcps}) and, similarly, any collection of $n$ matchings of size $n+o(n)$ yields a rainbow matching of size $n$ (see \cite{mcps,p}).
We refer to the former type of statement as a {\it weak} asymptotic and to the latter as a {\it strong} asymptotic, following the same terminology in~\cite{mcps}.
The current best-known values of the $o(n)$-terms are shown in Table~\ref{table:1}.

The choice of the terms {\it weak} and {\it strong} was motivated by the fact that, using a trivial reduction argument via dummy edges, a strong asymptotic statement immediately implies a weak asymptotic statement.
Perhaps surprisingly, Munh\'{a}-Correia, Pokrovskiy and Sudakov~\cite{mcps} were able to show, using a simple but powerful sampling trick, that, in these settings, the weak asymptotic statement, in fact, also implies the strong asymptotic statement.

\begin{table}[ht]
\caption{}\label{table:1}
\centering
\begin{tabular}{ |c|c|c|c|c| }
 \hline
 {\bf Host graph} & {\bf \# of matchings} & {\bf Size of each matching} & {\bf Size of rainbow matching} & {\bf Ref.} \\ 
 \hline
 Bipartite & $n$ & $n$ & $n-\sqrt{n}$ & \cite{bgs} \\ 
  \hline
 Any & $n$ & $n$ & $n-20n^{7/8}$ & \cite{mcps} \\ 
  \hline
 Bipartite & $n$ & $n+7n^{3/4}$ & $n$ & \cite{mcps} \\ 
  \hline
 Any & $n$ & $n+20n^{15/16}$ & $n$ & \cite{mcps} \\ 
 \hline
\end{tabular}
\end{table}   

We now shift our attention to the hypergraphs setting.
A hypergraph is {\it $r$-uniform} if every edge contains exactly $r$ vertices. 
An $r$-uniform hypergraph $H$ is {\it $r$-partite} if $V(H)$ can be partitioned into~$r$ sets $V_1\cup\cdots\cup V_r$, such that every edge in $H$ contains exactly one vertex from each $V_i$.

Aharoni, Charbit and Howard~\cite{ach} considered the following generalisation of Conjecture~\ref{conj:ab_bip}: given any $n$ matchings of size $n$ in an $r$-partite $r$-uniform hypergraph, what is the largest rainbow matching one can guarantee to find?
In this note, we improve the best known bounds for this question, and the related question where the $r$-partite restriction is dropped.
We also prove corresponding bounds for the analogous generalisation of Conjectures~\ref{conj:ab_bip_strong}
and~\ref{conj:ab_strong}, which, to the best of our knowledge, have not been considered previously in the literature: what is the smallest $s$ such that any $n$ matchings of size $s$ in an ($r$-partite) $r$-uniform hypergraph contain a rainbow matching of size $n$?

We start by discussing the first problem. We use the following notation, matching and extending that from~\cite{ach}. 

\begin{defn}[$g(r,n)$, $g'(r,n)$]
\mbox{}

\begin{itemize}
    \item Let $g(r,n)$ be the largest $s$ such that every collection of $n$ matchings of size $n$ in an $r$-partite $r$-uniform hypergraph admits a rainbow matching of size $s$. 
    \item Let~$g'(r,n)$ be the largest $s$ such that every collection of $n$ matchings of size $n$ in an $r$-uniform hypergraph admits a rainbow matching of size $s$.
\end{itemize} 
\end{defn}

Note trivially that $g'(r,n) \leq g(r,n)$ for all $r$ and $n$. Aharoni, Charbit and Howard~\cite{ach} focused only on the $r$-partite case and showed that for all $r \geq 3$, $g(r,n) \leq n-2^{r-2}$ when $n$ is even and $g(r,n) \leq n-2^{r-2}+1$ when $n$ is odd, whilst $g(3,n) \geq n/2$ for all values of $n$. Note that a greedy argument allows us to show that $g'(r,n) \geq \lceil{n/r}\rceil$. More recently, Aharoni, Berger, Chudnovsky, Zerbib~\cite{abcz} improved this lower bound to $g'(r,n) \geq \frac{n}{r-1/2}+O(1/r)$ for all $r$. Note that this also gives improved lower bounds for $g(r,n)$ for all $r \geq 4$, though the bound of Aharoni, Charbit and Howard~\cite{ach} remains the best lower bound when $r=3$ in the partite setting. 

The main purpose of this note is two-fold. Firstly, we give a short proof that $g'(r,n) \geq \frac{2n}{r+1}-f(r)$, for every $r \geq 3$, where $f(r)$ is a function depending only on $r$ and not $n$. More precisely, we show the following.

\begin{theo}\label{theo:lower}
   For every $r \geq 3$, 
   $$g'(r,n)\geq \frac{2n}{r+1}-\frac{\binom{2r}r}{r+1}.$$
\end{theo}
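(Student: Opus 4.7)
The plan is to take a rainbow matching $M$ of maximum size $k$ and derive a contradiction from the assumption $(r+1)k + \binom{2r}{r} < 2n$ by producing a rainbow matching of size $k+1$. Let $U := [n] \setminus \mathrm{colours}(M)$ be the set of unused colours, so $|U| = n-k$. Maximality of $M$ forces every one of the $n$ edges of each $M_c$ ($c \in U$) to meet $V(M)$, and since $M_c$ is itself a matching, the corresponding intersection vertices in $V(M)$ are all distinct. This gives $|V(M) \cap V(M_c)| \ge n$ for every $c \in U$ and hence
$$\sum_{c \in U} |V(M) \cap V(M_c)| \ge (n-k)n.$$

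Next, call an edge $e \in M_c$ with $c \in U$ \emph{simple at $v$} if $e \cap V(M) = \{v\}$. If no simple edge existed, every edge of every $M_c$ ($c \in U$) would meet $V(M)$ in at least two vertices and the preceding sum would double to at least $2(n-k)n$; comparing with $|V(M)| = rk$ then forces $rk \ge 2n$, already stronger than the theorem. Refining the double count by letting $b_c$ count the non-simple edges of $M_c$ gives $\sum_c b_c \le (n-k)(rk - n)$, and hence the total number of simple edges is at least $(n-k)(2n-rk)$; under the hypothesis $k < 2n/(r+1) - \binom{2r}{r}/(r+1)$ this is much larger than $\binom{2r}{r}$.

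A simple edge $e \in M_c$ at $v$ enables a \emph{swap}: replacing the edge $f_v \in M$ through $v$ by $e$ produces another rainbow matching $M' = (M \setminus \{f_v\}) \cup \{e\}$ of size $k$ in which the colour $c(f_v)$ is newly unused. Since $M'$ is itself of maximum size, it inherits the same structural constraints; in particular every edge of $M_{c(f_v)}$ must now meet $V(M') = (V(M) \setminus V(f_v)) \cup V(e)$. The strategy is to chain such swaps together until either an edge of some newly freed colour is disjoint from the current matching (yielding an augmentation to size $k+1$, contradicting maximality) or the bound $k \ge 2n/(r+1) - \binom{2r}{r}/(r+1)$ is forced.

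The main obstacle lies in controlling the interaction between multiple simultaneous swaps: each swap introduces $r-1$ new vertices outside $V(M)$, and compatibility of several swaps requires the corresponding $(r-1)$-sets to be pairwise disjoint and to avoid the other swap edges. I expect the Catalan-type constant $\binom{2r}{r}/(r+1)$ to appear as the extremal bound on obstructing configurations of simple edges attached to a single edge of $M$, presumably via a Turán-type or non-crossing count on families of $(r-1)$-subsets of $V \setminus V(M)$; obtaining a clean such estimate and matching it to the simple-edge abundance computed above is the crux of the argument.
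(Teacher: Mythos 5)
Your first half is sound and matches the paper's opening move: the count showing that the unused colours contribute, in total, at least $(n-k)(2n-rk)$ edges meeting $V(M)$ in exactly one vertex is precisely the paper's per-colour estimate. But the second half is a genuine gap, and you acknowledge it yourself: you do not have the bound on obstructing configurations attached to a single edge of $M$, and the chaining-of-swaps strategy you sketch is not how this gets resolved. Indeed it would be hard to make work: each swap replaces the $r$ vertices of an $M$-edge by the $r-1$ new vertices of the incoming edge, which can destroy the simplicity of the other edges you were counting on, and there is no evident potential function guaranteeing the chain terminates in an augmentation rather than cycling.

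The missing ideas are these. First, the right local notion is not a single simple edge but a \emph{pair}: call $e\in M$ \emph{good} for an unused colour $c$ if there are \emph{two} edges $f,f'\in M_c$ with $f\cap V(M)\subseteq e$ and $f'\cap V(M)\subseteq e$. A short count (a non-good $e$ absorbs at most one simple edge of colour $c$, a good $e$ at most $r$) converts your simple-edge abundance into at least $\frac{2n-(r+1)k}{r-1}$ good edges for each unused colour. Second, and crucially, if a single $e\in M$ is good for colours $c_1,\dots,c_\ell$ with witness pairs $(f_i,f_i')$, then maximality forces each of $f_i,f_i'$ to intersect each of $f_j,f_j'$ for all $i\neq j$: otherwise delete $e$ and insert the two disjoint witnesses of two distinct new colours, augmenting $M$ in a single step with no chaining. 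The resulting family is a cross-intersecting set pair system of size $2\ell$ with all sets of size $r$, so Bollob\'as's two-families theorem gives $\ell\le\frac12\binom{2r}{r}$. Double counting good pairs $(e,c)$ then yields $(n-k)\frac{2n-(r+1)k}{r-1}\le\frac12\binom{2r}{r}k$, and solving the quadratic in $k$ gives the theorem. In particular, the $r+1$ in the denominator of the constant $\binom{2r}{r}/(r+1)$ falls out of the quadratic formula; the Catalan-number appearance is incidental, not the trace of a non-crossing count as you conjectured.
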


This improves the results of Aharoni, Berger, Chudnovsky, Zerbib~\cite{abcz} when $n$ is sufficiently large relative to $r$. The proof is very short and comes from generalising and slightly simplifying the arguments in \cite{ach} which show that $g(3,n) \geq n/2$, as well as using a well-known result of Bollob\'{a}s~\cite{bollobas} concerning cross-intersecting families. However, this result seems far from tight, and we suspect that in fact $g(r,n)=n-o(n)$ for all $r$, but that a significantly more complicated strategy would be required to achieve this bound. 

In view of this, perhaps our second result, regarding upper bounds on $g(r,n)$ yields a more interesting observation. Whilst Aharoni, Charbit and Howard~\cite{ach} did not go as far as to conjecture that their construction showing that $g(r,n) \leq n-2^{r-2}$ when $n$ is even is the best possible upper bound construction, they observed that if $g(r,n) \geq n-2^{r-2}$, this would naturally generalise Conjecture \ref{conj:ab_bip} from the case~$r=2$. Our next result shows that the behaviour in the case $r\geq 3$ does not actually fit with such a conjecture. 

\begin{theo}\label{theo:upper}
For every $r\ge3$, and $n>6^r$, $$g(r,n)\leq n-\frac1{12r}n^{\frac{r-1}r}.$$
\end{theo}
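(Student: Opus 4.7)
The plan is to exhibit, for each $r\ge 3$ and $n>6^r$, an explicit collection of $n$ matchings of size $n$ in an $r$-partite $r$-uniform hypergraph whose largest rainbow matching has size at most $n-\frac{1}{12r}n^{(r-1)/r}$. The construction combines a common ``padding'' matching with a block-wise obstruction on a region of size roughly $n^{(r-1)/r}$ per part, with parameters carefully balanced.

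I would set $k:=\lceil n^{1/r}\rceil$ and $t:=\lceil \frac{1}{12r}n^{(r-1)/r}\rceil$, noting that $n>6^r$ is precisely what guarantees $tk\le n$. In each vertex class $V_l$ of size $n$, reserve a padding region $A_l$ of size $n-tk$ and $t$ disjoint blocks $B_l^{(1)},\ldots,B_l^{(t)}$ of size $k$ each. Fix a single perfect matching $M^*$ of $A:=A_1\cup\cdots\cup A_r$ to be shared by every $M_i$. Inside each block $B^{(q)}:=B_1^{(q)}\cup\cdots\cup B_r^{(q)}$ (with $B_l^{(q)}\cong \mathbb{Z}_k$), place the $k$ higher-dimensional Cayley matchings $N_a^{(q)}:=\{(j,j+a,j+2a,\ldots,j+(r-1)a):j\in\mathbb{Z}_k\}$ for $a\in\mathbb{Z}_k$, choosing $k$ even so that the non-existence of complete mappings in $\mathbb{Z}_k$ translates, by a standard argument, to: any rainbow matching drawn from $\{N_1^{(q)},\ldots,N_k^{(q)}\}$ using each row at most once has size at most $k-1$. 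Each $M_i$ is then defined by a coupling function $f\colon[n]\times[t]\to\mathbb{Z}_k$ via $M_i:=M^*\sqcup\bigsqcup_{q\in[t]} N^{(q)}_{f(i,q)}$.

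For any rainbow matching $F$, decompose $F=F_A\sqcup\bigsqcup_q F^{(q)}$ with $F_A\subseteq M^*$ and $F^{(q)}$ a matching in $B^{(q)}$, so $|F_A|\le n-tk$ and $|F^{(q)}|\le k$. The Cayley property forces $|F^{(q)}|\le k-1$ in every block $q$ whose edges are labeled by colours drawn from more than one fibre of $f(\cdot,q)$, while a block $q$ whose edges all have a common $f$-value $a_q$ has $|F^{(q)}|$ potentially equal to $k$ but requires $k$ distinct colours lying in the fibre $\{i:f(i,q)=a_q\}$. The coupling $f$ is engineered---via a product/orthogonal structure on $[n]\times[t]\to\mathbb{Z}_k$---so that no colour-assignment can avoid a Cayley deficit in all blocks simultaneously, and moreover the accumulated deficit across blocks is at least $t$. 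A Hall-type counting argument then gives $\sum_q|F^{(q)}|\le t(k-1)$, and combining with $|F_A|\le n-tk$ yields $|F|\le (n-tk)+t(k-1)=n-t\le n-\frac{1}{12r}n^{(r-1)/r}$.

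The main obstacle is the design of $f$ and the accompanying Hall-type argument. Naively, with $n\gg tk$ colours available, one could attempt to assign $k$ colours per block all lying in a single fibre, fully filling each block and producing no deficit. Avoiding this requires $f$ to couple the blocks so that the fibres $\{i:f(i,q)=a\}$ are ``as independent as possible'' across $q$, ensuring that a colour-assignment which monochromatises every block simultaneously is combinatorially blocked. Verifying that this coupling indeed forces the per-block deficits to accumulate to $t$ (rather than being swamped by the multiplicity of available colours) is the technical heart of the construction, and would make natural use of double-counting or linear-programming duality; it is also where the precise constant $\tfrac{1}{12r}$ ultimately enters.
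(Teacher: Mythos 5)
Your overall architecture (many disjoint blocks, each forcing a deficit of one in any rainbow matching, so that the deficits accumulate to $\Theta_r(n^{(r-1)/r})$) is exactly the skeleton of the paper's proof. The fatal problem is your choice of per-block obstruction. Each of your blocks contains only $k=\lceil n^{1/r}\rceil$ distinct matchings $N^{(q)}_0,\dots,N^{(q)}_{k-1}$, while there are $n\gg k$ colours, so for each block $q$ the fibres $\{i: f(i,q)=a\}$ partition $[n]$ into at most $k$ classes and some class contains at least $n/k$ colours. No choice of the coupling $f$ can prevent the greedy strategy you yourself flag as the danger: process the blocks in any order; when you reach block $q$, at most $tk\approx n/(12r)$ colours have been used so far, so some fibre of $f(\cdot,q)$ still contains at least $(n-tk)/k\ge n^{(r-1)/r}/2\gg k$ unused colours, and you may fill block $q$ completely with $k$ fresh colours all mapping to the same $N^{(q)}_a$. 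These $k$ edges are automatically pairwise disjoint, being the edges of a single matching, and the complete-mapping obstruction is vacuous because the edges of $F^{(q)}$ are not required to come from distinct $N^{(q)}_a$'s. Iterating over all $t$ blocks and adding $M^*$ yields a rainbow matching of size $n$, so your construction has no deficit at all. A Cayley-table block only bites when the number of colours it sees is at most the number of matchings it contains, which with $k\approx n^{1/r}$ is hopeless.

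What is genuinely needed, and what the paper uses, is a block of size $t=\Theta(rn^{1/r})$ per part that resists \emph{all} $n$ colours simultaneously: a collection of $n$ matchings of size $t$ with no rainbow matching of size $t$, where $n$ may be as large as $\lfloor(t/3r)^r\rfloor$ --- polynomially (degree $r$) many colours in the block size, rather than the linearly many a Cayley table provides. This is Theorem 3.1 of Pohoata--Sauermann--Zakharov, a substantially more intricate construction, and it is the crux of the proof. Once it is available, the rest is exactly your accounting: take $q\approx n/t\approx \frac{1}{12r}n^{(r-1)/r}$ disjoint copies (with one block resized so the matching sizes sum to exactly $n$), and each copy contributes a deficit of one. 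Without this ingredient, or an equivalent construction, your argument cannot be completed.
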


That is, for sufficiently large $n$ in terms of $r\ge3$, there exists a collection of $n$ matchings of size $n$ in an $r$-partite $r$-uniform hypergraph with no rainbow matching of size larger than $n-\frac1{12r}n^{\frac{r-1}r}$. 
In particular, the size of a largest rainbow matching is bounded away from $n$ not just by a function of $r$, but by a function that grows with $n$. If Conjecture \ref{conj:ab_bip} is true, then this shows quite different behaviour between the case where $r=2$ and the cases where $r \geq 3$. 

Whilst we do not suggest that Conjecture \ref{conj:ab_bip} is not true, it is perhaps interesting to note that if this upper bound generalised to the case $r=2$, it would match closely with the current best known lower bound $g(2,n)\ge n-\sqrt{n}$ (see Table \ref{table:1}).

Our proof of Theorem \ref{theo:upper} is again short, though crucially relies on a result of Pohoata, Sauermann and Zakharov~\cite{psz}, which was used to make significant progress on a related problem. We say more about this in Section \ref{sec_ub} and the concluding remarks.

As previously stated, we also introduce the analogous generalisations of Conjectures \ref{conj:ab_bip_strong} and \ref{conj:ab_strong} to hypergraphs, and prove related bounds. 

\begin{defn}[$h(r,n)$, $h'(r,n)$]
\mbox{}
\begin{itemize}
    \item Let $h(r,n)$ be the smallest $s$ such that every collection of $n$ matchings of size $s$ in an $r$-partite $r$-uniform hypergraph admits a rainbow matching of size $n$.
    \item Let $h'(r,n)$ be the smallest $s$ such that every collection of $n$ matchings of size $s$ in an $r$-uniform hypergraph admits a rainbow matching of size $n$. 
\end{itemize}
\end{defn}

It is clear that $h'(r,n) \geq h(r,n)$.  Moreover, we prove the following.

\begin{theo}\label{theo:strong}
For every $r \geq 3$ and $n$ sufficiently large, we have that
    $$n+\frac1{12r}n^{\frac{r-1}{r}} \leq h(r,n)\leq h'(r,n) \leq \frac{(r+1)n}{2}+3r^2n^{\frac{2r-1}{2r}}.$$
\end{theo}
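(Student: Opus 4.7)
The proof naturally splits into two parts. For the lower bound $h(r,n) \geq n + \frac{1}{12r}n^{(r-1)/r}$, I would proceed by a direct reduction to Theorem~\ref{theo:upper}. Set $s := n + \lceil \frac{1}{12r} n^{(r-1)/r}\rceil$; for $n$ sufficiently large we have $s > 6^r$, so Theorem~\ref{theo:upper} produces a collection of $s$ matchings, each of size $s$, in an $r$-partite $r$-uniform hypergraph with no rainbow matching of size greater than $s - \frac{1}{12r}s^{(r-1)/r}$. Selecting any $n$ of these matchings yields $n$ matchings of size $s$, and a short computation (using $s > n$ and hence $s^{(r-1)/r} > n^{(r-1)/r}$) shows $n \leq s - \frac{1}{12r}s^{(r-1)/r}$; therefore a rainbow matching of size $n$ in the sub-collection would be a rainbow matching of that size in the original collection, contradicting Theorem~\ref{theo:upper}. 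Hence $h(r,n) \geq s+1 \geq n + \frac{1}{12r}n^{(r-1)/r}$ as required.

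For the upper bound $h'(r,n) \leq (r+1)n/2 + 3r^2 n^{(2r-1)/(2r)}$, my plan is to adapt the argument behind Theorem~\ref{theo:lower} (a maximum rainbow matching analysis combined with the Bollob\'as set-pair inequality) to the \emph{unbalanced} setting in which each matching has size $s$ much larger than the number $n$ of colors. Concretely, given $n$ matchings of size $s$ with $s \geq (r+1)n/2 + 3r^2 n^{(2r-1)/(2r)}$, I would fix a maximum rainbow matching $M$ with $|M|=t$ and assume for contradiction $t < n$. Maximality forces every edge of each of the $n-t$ uncovered matchings to intersect $V(M)$; the cross-intersecting pair construction from the proof of Theorem~\ref{theo:lower}, fed into Bollob\'as's inequality, should then yield a bound of the form $t \geq \tfrac{2s}{r+1} - E(n,s,r)$ for a controlled error $E$. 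Plugging in the chosen value of $s$ contradicts $t<n$, forcing a rainbow matching of size $n$.

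The principal technical obstacle is pinning down the precise error term $3r^2 n^{(2r-1)/(2r)}$. The exponent $1 - 1/(2r)$ is precisely the midpoint of $1$ and $1 - 1/r$, which strongly suggests an interpolation, Cauchy--Schwarz, or concentration step balancing two distinct error sources --- one of order $n^{(r-1)/r}$ (the scale appearing in Theorem~\ref{theo:upper}) and one of order $n^{1/2}$ (typical probabilistic fluctuations). A natural implementation is a two-phase argument: first extract a rainbow matching of size $n - O(n^{(2r-1)/(2r)})$ by invoking (a suitable generalization of) Theorem~\ref{theo:lower}, then complete it by a probabilistic or greedy extension whose success probability is controlled via concentration, with the extension phase contributing the sub-polynomial error. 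Matching the exact constant $3r^2$ will depend on careful tracking of these two error contributions and their optimal trade-off.
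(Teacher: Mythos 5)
Your lower-bound reduction is workable in spirit but is written with the key inequality reversed. To conclude that your sub-collection of $n$ matchings of size $s$ has no rainbow matching of size $n$, you need $n > s - \frac1{12r}s^{\frac{r-1}{r}}$, not $n \le s - \frac1{12r}s^{\frac{r-1}{r}}$: the construction from Theorem~\ref{theo:upper} only forbids rainbow matchings \emph{larger} than $s - \frac1{12r}s^{\frac{r-1}{r}}$. Moreover, the crude estimate $s^{\frac{r-1}{r}} > n^{\frac{r-1}{r}}$ only gives $s - \frac1{12r}s^{\frac{r-1}{r}} < n+1$ because of the ceiling in your definition of $s$; you need the quantitative gap $s^{\frac{r-1}{r}} - n^{\frac{r-1}{r}} = \Theta_r(n^{\frac{r-2}{r}}) \ge 12r$ to push this strictly below $n$. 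This is all fixable for large $n$, but the paper's route is cleaner: take the Theorem~\ref{theo:upper} construction with parameter $n$ itself and append $\frac1{12r}n^{\frac{r-1}{r}}-1$ pairwise disjoint dummy edges to every colour class, so a rainbow matching of size $n$ would restrict to a forbidden one of size $n-m$.

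The real gap is in the upper bound. The direct adaptation of the Theorem~\ref{theo:lower} argument cannot reach a rainbow matching of size exactly $n$ with $s = \frac{(r+1)n}{2}+3r^2n^{\frac{2r-1}{2r}}$: the Bollob\'as-based inequality (Lemma~\ref{lem:gibounds}) reads $(n-m)\frac{2s-(r+1)m}{r-1}\le\frac12\binom{2r}{r}m$, and assuming $m<n$ this only yields $n-m\le\frac{(r-1)\binom{2r}{r}}{12r^2}n^{\frac1{2r}}$, i.e.\ a rainbow matching of size $n-\Theta_r(n^{\frac1{2r}})$, never $n$. The error term $\frac12\binom{2r}{r}m$ is proportional to $m\approx n$ and does not shrink as $s$ grows; forcing $m=n$ directly would require $s\gtrsim\frac{r-1}{4}\binom{2r}{r}n$, destroying the leading constant $\frac{r+1}{2}$. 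The paper therefore proves only the weak statement (matchings of size $\lceil\frac{(r+1)n}{2}\rceil$ give a rainbow matching of size $n-2^r\sqrt n$, Lemma~\ref{lem:stronglower}) and then applies the Munh\'{a}-Correia--Pokrovskiy--Sudakov sampling trick, which your third paragraph gestures at but with the order of operations inverted: one must reserve a random vertex set $S$ (each vertex kept with probability $p=4n^{-\frac1{2r}}$) \emph{before} extracting any matching, find the size-$(n-2^r\sqrt n)$ rainbow matching among edges disjoint from $S$, and only then complete it greedily using the $\gtrsim\sqrt n$ edges per colour lying entirely inside $S$. Extracting a maximum rainbow matching first and then attempting a greedy or probabilistic extension fails precisely because maximality forces every remaining edge of an unused colour to meet the matching. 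The exponent $\frac{2r-1}{2r}$ then arises from balancing $p^r s\gtrsim\sqrt n$ (enough spare edges inside $S$ to cover the $2^r\sqrt n$ deficit) against the loss of roughly $prs$ edges per colour outside $S$, not from an interpolation with the $n^{\frac{r-1}{r}}$ scale of Theorem~\ref{theo:upper}.
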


The lower bound in Theorem \ref{theo:strong} follows easily from Theorem \ref{theo:upper} by adding dummy edges, while the upper bound proof combines the ideas for proving Theorem \ref{theo:lower} with the aforementioned sampling trick introduced by Munh\'{a}-Correia, Pokrovskiy and Sudakov. 

\subsection{Organisation} The rest of the paper is set out as follows.
The proofs of our results are found in Section \ref{sec:proofs}.
In Section \ref{sec_conc} we give some concluding remarks including a brief discussion of some related literature that feels amiss to leave out of this note.

\section{Proofs}\label{sec:proofs}

In this section we prove Theorems \ref{theo:lower}, \ref{theo:upper} and \ref{theo:strong}.

\subsection{Proof of Theorem \ref{theo:lower}} \label{sec_lb}

We deduce Theorem \ref{theo:lower} from the following stronger statement.

\begin{lemma}\label{lem:gibounds}
Let $\mathcal{H}=\{H_1, \ldots, H_n\}$ be a collection of $n$ matchings of size $N$ in an $r$-uniform hypergraph. 
Let $m$ be the size of a maximum rainbow matching for $\mathcal{H}$.
Then \[(n-m)\frac{2N-(r+1)m}{r-1}\leq\frac12\binom{2r}rm.\]
\end{lemma}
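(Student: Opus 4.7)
The plan is to start from a maximum rainbow matching $M = \{e_1, \ldots, e_m\}$, where we may assume $e_i \in H_i$ is coloured $i$, and then double-count certain ``good'' edges. By the maximality of $|M|$, every edge of $H_i$ with $i > m$ must meet $V(M)$, for otherwise $M$ extends using colour $i$.

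Call $f \in H_i$ (with $i > m$) \emph{good} if $f \notin M$ and $|f \cap V(M)| = 1$, so $f$ hits exactly one edge $e_j$ of $M$ at a single vertex, its \emph{anchor}. Since $H_i$ is a matching, $\sum_{f \in H_i}|f \cap V(M)| \le |V(M)| = rm$, and edges of $H_i$ that lie in $M$ contribute $r$ to this sum while edges with $|f \cap V(M)| \ge 2$ contribute at least $2$. Plugging these weights against $|H_i| = N$ and rearranging gives a lower bound on the number of good edges per unused colour; summing over the $n-m$ unused colours produces at least $(n-m)(2N-(r+1)m)/(r-1)$ good pairs $(i,f)$ in total.

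For the complementary upper bound, partition the good pairs by anchor. Fix $e_j \in M$ and take two distinct good pairs $(i_1, f_1), (i_2, f_2)$ with $i_1 \ne i_2$ both anchored at $e_j$. Then $f_1 \cap f_2 \ne \emptyset$: otherwise $(M \setminus \{e_j\}) \cup \{f_1, f_2\}$ is a rainbow matching of size $m+1$, using the colour set $([m] \setminus \{j\}) \cup \{i_1, i_2\}$, contradicting the maximality of $m$. Picking one representative $f_i$ per anchored colour therefore gives a pairwise intersecting family of $r$-sets, each containing a vertex of $e_j$ and $r-1$ vertices in $V \setminus V(M)$. Applying Bollob\'as' cross-intersecting set-pair inequality to pairs $(A_i, B_i)$ of total size $2r$ --- built so that $A_i = f_i$ and $B_i$ is a suitable $r$-set disjoint from $f_i$ encoding both $e_j \setminus f_i$ and an auxiliary $r$-set outside $V(M)$ --- should deliver at most $\binom{2r}{r}/2$ good pairs per anchor, and hence at most $\binom{2r}{r}m/2$ good pairs in total. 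Combining the two counts yields the claimed inequality.

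I expect the main obstacle to be the Bollob\'as step: the naive pairing $(f_i, e_j \setminus f_i)$ does not satisfy the cross-intersection hypothesis $A_i \cap B_{i'} \ne \emptyset$ on its own, since $f_i \cap f_{i'}$ may occur either inside $e_j$ or in $V \setminus V(M)$. The correct strategy should be to fatten each $B_i$ with vertices chosen from $V \setminus V(M)$ so that $|B_i| = r$ and the effective ground set has size $2r$, matching the classical bound $\binom{2r}{r}/2$ on intersecting families of $r$-subsets of $[2r]$; checking that the cross-intersection condition is then forced by the pairwise intersection of the $f_i$'s is the key verification.
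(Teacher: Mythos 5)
Your overall architecture (maximality, an exchange argument, Bollob\'as, and a double count over ``good'' objects) matches the paper's, but your definition of \emph{good} is attached to the wrong side of the incidence and this makes the upper-bound half of the double count false. You call a single edge $f\in H_i$ with $|f\cap V(M)|=1$ good and claim at most $\tfrac12\binom{2r}{r}$ good pairs per anchor $e_j\in M$. This cannot be repaired: take one vertex $v\in e_j$ and arbitrarily many colours $i>m$, each contributing a single edge $f_i$ with $f_i\cap V(M)=\{v\}$, the sets $f_i\setminus\{v\}$ pairwise disjoint. No exchange is available (any two of these edges meet at $v$, so $(M\setminus\{e_j\})\cup\{f_{i_1},f_{i_2}\}$ is not a matching), yet the number of good pairs anchored at $e_j$ is unbounded. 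The exchange argument only yields that your representatives form a \emph{pairwise intersecting} family of $r$-sets, and such families have no bounded size (sunflowers with a common core); your proposed fix of ``fattening'' $B_i$ with vertices outside $V(M)$ cannot produce the cross-intersecting hypothesis, since $B_i$ must avoid $f_i\ni v$ while every other $f_{i'}$ may meet $f_i$ only at $v$. Bollob\'as genuinely needs, for each index, a set \emph{disjoint} from $A_i$ that still meets all other $A_{i'}$, and a lone edge per colour provides no such partner.

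The paper's proof circumvents exactly this: ``good'' is a property of an edge $e\in M$ with respect to a colour $i$, and requires \emph{two} edges $f,f'\in H_i$ with $f\cap V(M)\subseteq e$ and $f'\cap V(M)\subseteq e$. The two same-colour witnesses are disjoint (they lie in one matching), and the exchange argument forces all cross-colour pairs to intersect, so $(\{f_1,\dots,f_\ell,f_1',\dots,f_\ell'\},\{f_1',\dots,f_\ell',f_1,\dots,f_\ell\})$ is a legitimate cross-intersecting set-pair system of size $2\ell$, giving $\ell\le\tfrac12\binom{2r}{r}$ per edge of $M$. Colours that send only one edge into a given $e\in M$ (your sunflower scenario) are then absorbed on the lower-bound side via $rg_i+(m-g_i)\ge h_i$, where $h_i\ge 2N-rm$ counts edges of $H_i$ meeting $M$ in exactly one of its edges; this is precisely where the denominator $r-1$ and the term $(r+1)m$ in the lemma come from. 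Relatedly, note that your own weight count gives at least $2N-rm$ good edges per unused colour, not $(2N-(r+1)m)/(r-1)$ --- that expression only arises after the step you are missing.
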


We first show how Lemma \ref{lem:gibounds} implies Theorem \ref{theo:lower}.

\begin{proof}[Proof of Theorem \ref{theo:lower}]
Let $\mathcal{H}=\{H_1, \ldots, H_n\}$ be a collection of $n$ matchings of size $n$ in an $r$-uniform hypergraph. 
Let $m$ be the size of a maximum rainbow matching for $\mathcal{H}$. 
By Lemma \ref{lem:gibounds}, 
\[(n-m)\frac{2n-(r+1)m}{r-1}\leq \frac12\binom{2r}{r}m.\]
After rearranging, we get
\[(r+1)m^2-\left((r+3)n+\frac12(r-1)\binom{2r}{r}\right)m+2n^2\leq 0.\]
Solving this gives
\begin{align*}
m&\geq\frac{(r+3)n+\frac12(r-1)\binom{2r}{r}-\sqrt{\left((r+3)n+\frac12(r-1)\binom{2r}{r}\right)^2-8(r+1)n^2}}{2(r+1)}\\
&=\frac{(r+3)n+\frac12(r-1)\binom{2r}{r}-\sqrt{(r-1)^2n^2+(r-1)(r+3)\binom{2r}{r}n+\frac14(r-1)^2\binom{2r}{r}^2}}{2(r+1)}\\
&=\frac{(r+3)n+\frac12(r-1)\binom{2r}{r}-\sqrt{\left((r-1)n+\frac12(r+3)\binom{2r}r\right)^2-2(r+1)\binom{2r}2^2}}{2(r+1)}\\
&\geq\frac{(r+3)n+\frac12(r-1)\binom{2r}{r}-\sqrt{\left((r-1)n+\frac12(r+3)\binom{2r}r\right)^2}}{2(r+1)}\\
&=\frac{(r+3)n+\frac12(r-1)\binom{2r}{r}-(r-1)n-\frac12(r+3)\binom{2r}r}{2(r+1)}=\frac{2n}{r+1}-\frac{\binom{2r}r}{r+1},
\end{align*}
completing the proof.
\end{proof}

To prove Lemma \ref{lem:gibounds}, we follow closely the double-counting strategy of \cite[Theorem 1.13]{ach} used to show that $g(3,n) \geq n/2$. This involves proving two short claims. 
For the first claim, we generalise and simplify slightly the proof given by \cite{ach}. 
For the second claim, we use the well-known `two families theorem' of Bollob\'{a}s~\cite{bollobas} concerning cross-intersecting set pair systems. 
A {\it cross-intersecting set pair system} of {\it size} $m \geq 2$ consists of two collections of finite sets $\mathcal{A}=\{A_1, \ldots, A_m\}$ and $\mathcal{B}=\{B_1, \ldots, B_m\}$ each of size $m$, such that $A_i \cap B_i = \emptyset$ for every $i \in [m]$, and $A_i \cap B_j \neq \emptyset$ for every $1 \leq i \neq j \leq m$.

\begin{theo}[Bollob\'{a}s~\cite{bollobas}] \label{theo:bollobas}
Let $(\mathcal{A}, \mathcal{B})$ be a cross-intersecting set pair system of size $m \geq 2$. Then 
$$\sum_{i \in [m]} \binom{|A_i|+|B_i|}{|A_i|}^{-1} \leq 1.$$
\end{theo}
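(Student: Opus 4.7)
The plan is to prove the inequality via Katona's classical random permutation argument. Let $X = \bigcup_{i=1}^m (A_i \cup B_i)$ be the ground set, and let $\pi$ be a uniformly random linear ordering of $X$. For each $i \in [m]$, let $E_i$ denote the event that every element of $A_i$ precedes every element of $B_i$ in $\pi$. The strategy is to show that the events $E_1, \dots, E_m$ are pairwise disjoint, compute $\Pr[E_i]$ explicitly, and then sum.

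The key step, and really the only nontrivial one, is the disjointness of the events $E_i$. Suppose for contradiction that $E_i$ and $E_j$ both occur for some $i \neq j$. By the cross-intersecting hypothesis we can pick elements $x \in A_i \cap B_j$ and $y \in A_j \cap B_i$; note that $x \neq y$ since, for instance, $x \in A_i$ and $y \in B_i$ while $A_i \cap B_i = \emptyset$. The event $E_i$ forces $x$ (as an element of $A_i$) to appear before $y$ (as an element of $B_i$) in $\pi$. Symmetrically, $E_j$ forces $y$ (as an element of $A_j$) to appear before $x$ (as an element of $B_j$). This contradiction establishes that the $E_i$ are pairwise disjoint.

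Next, I compute $\Pr[E_i]$. Since $A_i \cap B_i = \emptyset$, the set $A_i \cup B_i$ has exactly $|A_i| + |B_i|$ elements, and the restriction of the random permutation $\pi$ to these elements is a uniformly random ordering of $A_i \cup B_i$. The event $E_i$ is precisely the event that, in this induced ordering, the first $|A_i|$ positions are occupied by $A_i$ and the remaining $|B_i|$ positions by $B_i$. Among the $\binom{|A_i| + |B_i|}{|A_i|}$ equally likely ways to choose which positions receive the elements of $A_i$, exactly one realises $E_i$, so
\[
\Pr[E_i] = \binom{|A_i| + |B_i|}{|A_i|}^{-1}.
\]

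Finally, since the events are pairwise disjoint, $\sum_{i=1}^m \Pr[E_i] = \Pr\bigl[\bigcup_{i=1}^m E_i\bigr] \leq 1$, which is exactly the asserted inequality. The main obstacle is purely the disjointness verification; once that is in hand, the rest is a direct probability computation with no further technicalities.
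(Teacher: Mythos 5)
Your proof is correct and complete. Note, however, that the paper itself gives no proof of this statement: it is quoted verbatim as Bollob\'as's classical ``two families theorem'' and used as a black box in the proof of Lemma 2.2 (Claim 2.4), so there is no internal argument to compare yours against. What you have written is the standard Katona-style random permutation proof of the set-pair inequality: the disjointness of the events $E_i$ is exactly where both hypotheses enter (cross-intersection supplies $x\in A_i\cap B_j$ and $y\in A_j\cap B_i$, and $A_i\cap B_i=\emptyset$ guarantees $x\neq y$, giving the order contradiction), and the computation $\Pr[E_i]=\binom{|A_i|+|B_i|}{|A_i|}^{-1}$ via the uniformity of the induced ordering on $A_i\cup B_i$ is accurate. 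This differs from Bollob\'as's original 1965 argument, which proceeds by induction (a compression/weight-counting argument) rather than probabilistically, but the permutation proof is shorter, self-contained, and fully rigorous as you present it; nothing is missing.
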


We are now ready to prove Lemma \ref{lem:gibounds}.

\begin{proof}[Proof of Lemma \ref{lem:gibounds}]
Let $\mathcal{H}=\{H_1, \ldots, H_n\}$ be a collection of $n$ matchings of size $N$ in an $r$-uniform hypergraph. 
Let $M$ be a maximum rainbow matching for $\mathcal{H}$ and let $m:=|M|$.
We say that the edges in $H_i$ have colour $i$ for every $i\in[n]$. 
Reordering if necessary, we may assume that the colours used in $M$ are $1, \ldots, m$. 

We say that an edge $e \in M$ is {\it good} for colour $i>m$ if there exist edges $f_1, f_2 \in E(H_i)$ such that $f_j \cap e = f_j \cap V(M)$ for $j \in \{1,2\}$. Let $g_i$ denote the number of good edges for colour $i>m$.

\begin{claim}\label{claim:goodlower}
    For each $i > m$ we have that $g_i \geq \frac{2N-(r+1)m}{r-1}$.
\end{claim}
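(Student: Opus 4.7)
The plan is to use maximality of $M$ together with the matching structure of $H_i$ to force many edges of $H_i$ to concentrate inside a single edge of $M$. First, since $i>m$, colour $i$ is not represented in $M$, so if any $f\in H_i$ were disjoint from $V(M)$ then $M\cup\{f\}$ (with $f$ coloured $i$) would be a larger rainbow matching, contradicting maximality of $M$. Hence every $f\in H_i$ meets $V(M)$, and I let $d(f)\ge 1$ denote the number of edges of $M$ that $f$ meets.

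The second ingredient is a standard double count of vertex incidences between $H_i$ and $V(M)$. Because $H_i$ is a matching, $\sum_{f\in H_i}|f\cap V(M)|\le |V(M)|=rm$, and since $d(f)\le |f\cap V(M)|$ this yields $\sum_f d(f)\le rm$. Splitting according to $d(f)=1$ versus $d(f)\ge 2$, the number of $f\in H_i$ satisfying $d(f)=1$ is at least $2N-rm$; equivalently, at least $2N-rm$ edges of $H_i$ touch $V(M)$ through exactly one edge of $M$.

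Now, for each $e\in M$, let $T_e$ be the number of $f\in H_i$ with $\emptyset\ne f\cap V(M)\subseteq e$. The previous step gives $\sum_{e\in M}T_e\ge 2N-rm$, and the definition of ``good'' is exactly $T_e\ge 2$. Two easy upper bounds finish the argument: first, $T_e\le r$ for every $e$, because the $f$'s counted are pairwise disjoint and each must use at least one (distinct) vertex of the $r$-element set $e$; second, $T_e\le 1$ for every not-good $e$. Writing $g_i$ for the number of good edges,
\[
2N-rm \;\le\; \sum_{e\in M}T_e \;\le\; r\cdot g_i+1\cdot(m-g_i) \;=\; (r-1)g_i+m,
\]
which rearranges to the claimed bound $g_i\ge\frac{2N-(r+1)m}{r-1}$.

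The only real choice point in the proof is picking the right quantities to double count; the factor-$r$ gap between $T_e\le r$ (on good edges) and $T_e\le 1$ (on not-good edges) is precisely what produces the denominator $r-1$ in the conclusion. Once that bookkeeping is in place, everything else is a one-line inequality chain, so I do not expect any serious obstacle.
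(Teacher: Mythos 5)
Your proof is correct and follows essentially the same double-counting argument as the paper: your count of edges $f\in H_i$ with $d(f)=1$ is the paper's $h_i\ge 2N-rm$, and your bound $\sum_e T_e\le rg_i+(m-g_i)$ is exactly the paper's observation that a good edge of $M$ meets at most $r$ such edges while a not-good edge meets at most one. No issues.
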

\begin{proofclaim}
    For every $i>m$, by the maximality of $M$, every edge in $H_i$ must intersect some edge in $M$.
    Let $H_i'$ be the set of edges in $H_i$ that intersect exactly one edge in $M$, and let $h_i=|H_i'|$. 
    Then, each edge in $H_i\setminus H_i'$ must intersect $M$ in at least two vertices.
    Since $H_i$ is a matching, edges in $H_i$ intersect edges in $M$ in distinct vertices, and so we have $|H_i'|+2|H_i\setminus H_i'|\le r|M|$. 
    It follows that $h_i+2(N-h_i)\leq rm$, and thus $h_i\geq 2N-rm$. From the definition, every edge in $M$ that is not good for colour $i$ intersects at most one edge in $H_i'$, so $rg_i+(m-g_i)\geq h_i$. Combining these two inequalities yields $(r-1)g_i\geq 2N-(r+1)m$, and thus $g_i\geq\frac{2N-(r+1)m}{r-1}$.\qedclaim
\end{proofclaim}
\begin{claim}\label{claim:goodupper}
   Any edge $e \in M$ is good for at most $\frac{1}{2}\binom{2r}{r}$ colours $i>m$.
\end{claim}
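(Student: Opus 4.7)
The plan is to use Bollobás's two families theorem (Theorem~\ref{theo:bollobas}) applied to a set pair system of size $2t$, where $t$ is the number of colours $i>m$ for which $e$ is good. Fix $e\in M$ and, for each such $i$, fix (by definition of good) two distinct edges $f_1^{(i)},f_2^{(i)}\in E(H_i)$ with $f_j^{(i)}\cap V(M)=f_j^{(i)}\cap e$ for $j=1,2$. Since $H_i$ is a matching, $f_1^{(i)}\cap f_2^{(i)}=\emptyset$.

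The key step is a maximality argument: for distinct good colours $i\neq j$ and any $k,k'\in\{1,2\}$,
\[
f_k^{(i)}\cap f_{k'}^{(j)}\neq \emptyset.
\]
Suppose not. The defining property $f_k^{(i)}\cap V(M)\subseteq e$ implies $f_k^{(i)}$ is disjoint from $V(M\setminus\{e\})$, and similarly for $f_{k'}^{(j)}$. Together with the assumed disjointness, $(M\setminus\{e\})\cup\{f_k^{(i)},f_{k'}^{(j)}\}$ is a matching of size $m+1$ (in the edge case where one of the $f_k^{(i)}$ equals $e$ itself, the same conclusion follows even more directly, since $M\cup\{f_{k'}^{(j)}\}$ is already a matching of size $m+1$). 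Assigning colours $i$ and $j$ to the two new edges and keeping the original colouring of $M\setminus\{e\}$ yields a rainbow matching of size $m+1$, contradicting the maximality of $M$.

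Now form a cross-intersecting set pair system of size $2t$ by contributing, for each good colour $i$, the two ordered pairs $(A,B)=(f_1^{(i)},f_2^{(i)})$ and $(f_2^{(i)},f_1^{(i)})$. The diagonal condition $A\cap B=\emptyset$ holds from the matching property. For two distinct pairs with the same index $i$ but swapped orientation, the cross-intersection equals $f_1^{(i)}$ or $f_2^{(i)}$, which is non-empty; for pairs with different indices $i\neq j$, the previous paragraph provides non-empty cross-intersection. Since $|A|=|B|=r$ for every pair, Theorem~\ref{theo:bollobas} gives
\[
2t\cdot\binom{2r}{r}^{-1}\le 1,
\]
i.e.\ $t\le \tfrac12\binom{2r}{r}$, as claimed. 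The only delicate point in the argument is handling the possibility that some $f_k^{(i)}$ coincides with $e$ (which can happen if $e\in E(H_i)$); as sketched above, the extension argument still produces a rainbow matching of size $m+1$, so the overall strategy is robust.
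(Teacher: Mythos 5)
Your proof is correct and follows essentially the same route as the paper: fix two witness edges per good colour, use maximality of $M$ to show all cross-pairs intersect, and apply Bollob\'{a}s's theorem to the size-$2t$ set pair system obtained by taking both orderings of each disjoint pair. The extra remark about the degenerate case $f_k^{(i)}=e$ is a harmless refinement the paper does not spell out.
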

\begin{proofclaim}
    Suppose that some edge $e\in M$ is good for $\ell$ colours $c_1,\ldots,c_\ell>m$, then from the definition, for every $i\in[\ell]$ there exist edges $f_i,f_i'$ of colour $c_i$, such that $f_i\cap e=f_i\cap V(M)$ and $f_i'\cap e=f_i'\cap V(M)$. If there exist distinct $i,j\in [\ell]$ such that one of $f_i,f_i'$, say $f_i$, and one of $f_j,f_j'$, say $f_j$, are disjoint, then $M\setminus\{e\} \cup \{f_i, f_j\}$ is a rainbow matching in $\mathcal{H}$ of size $|M|+1$, contradicting the maximality of $M$. 
    Hence, for all distinct $i,j\in[\ell]$, all of $f_i\cap f_j, f_i\cap f_j', f_i'\cap f_j, f_i'\cap f_j'$ are non-empty. This means that $(\{f_1,\ldots, f_\ell,f_1',\ldots, f_\ell'\},\{f_1',\ldots, f_\ell',f_1,\ldots, f_\ell\})$ form a cross-intersecting set pair system of size $2\ell$. By Theorem \ref{theo:bollobas}, since $|f_i|=|f_i'|=r$ for all $i$, we have $2\ell/\binom{2r}{r}\leq1$, and so $\ell\le\frac12\binom{2r}{r}$.\qedclaim
\end{proofclaim}
Combining these two claims, we get 
\[(n-m)\frac{2N-(r+1)m}{r-1}\leq \sum_{i>m} g_i \leq \frac12\binom{2r}{r}m,\]
as required.
\end{proof}

\subsection{Proof of Theorem \ref{theo:upper}} \label{sec_ub}

Before proving Theorem \ref{theo:upper}, we return to the previous best upper bound on $g(r,n)$ observed by Aharoni, Charbit and Howard~\cite{ach}. Since it is not explicitly written out in \cite{ach}, we include here a brief description of their construction that shows $g(r,n) \leq n-2^{r-2}$ when $n$ is even. Let $G$ be a hypergraph with vertex set $V=\{a_1, \ldots, a_r, b_1, \ldots, b_r\}$ and the following set of coloured edges. For each of the $2^{r-1}$ subsets $T$ of $[r]$ containing the element $1$, let $e_T=\{a_i:i \in [T]\} \cup \{b_i: i \in [r]\setminus T\}$, $f_T=V(G) \setminus e_T$, and include both of these edges in $G$, coloured with colour $\gamma_T$. Then $G$ is an $r$-uniform $r$-partite hypergraph with $2^r$ edges coloured with $2^{r-1}$ colours, such that each colour class contains 2 edges that form a perfect matching of $G$. Moreover, note that the largest rainbow matching in $G$ has size $1$. Relabel the $2^{r-1}$ colours as $c_1,\ldots,c_{2^{r-1}}$. Let $H$ be the union of $n/2$ disjoint copies of $G$. For $i \in [2^{r-1}-1]$, let $M_i$ be the matching consisting of all $n$ edges of colour $c_i$ in $H$. For $i \geq 2^{r-1}$, let $M_i$ be the matching consisting of all $n$ edges of colour $c_{2^{r-1}}$ in $H$. Note that for any $j \in [2^{r-1}-1]$, whenever we choose an edge in the matching $M_j$, this intersects exactly two edges in every $M_i$ with $i \neq j$, and for any $i \geq 2^{r-1}$, any edge in $M_i$ intersects exactly two edges in every $M_j$ such that $j \in [2^{r-1}-1]$. Thus our best strategy is to choose $n-2^{r-1}$ disjoint edges from $M_{2^{r-1}+1}, \ldots, M_n$, and find a rainbow matching of size $2^{r-2}$ in $M_1, \ldots, M_{2^{r-1}}$, giving a rainbow matching of size at most $n-2^{r-1}+2^{r-2}=n-2^{r-2}$.

To prove Theorem \ref{theo:upper}, we use the following result of Pohoata, Sauermann and Zakharov~\cite[Theorem 3.1]{psz}, which arises from considering a somewhat more complex construction. For ease of application in our setting we make a small change, but our statement follows easily from the original by noting that $\floor{(t/3r)^r} \leq (\floor{t/r}-1)^r$ when $t \geq 3r$. 

\begin{theo}[\cite{psz}]\label{theo:psz}
Let $r\geq3$, $t\geq 3r$ and $n=\floor{(t/3r)^r}$ be integers. 
Then there exists a collection of $n$ matchings of size $t$ in an $r$-uniform $r$-partite hypergraph with $tr$ vertices that does not admit a rainbow matching of size $t$.
\end{theo}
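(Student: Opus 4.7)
The plan is to give an explicit block-structured construction and then show that no rainbow matching of size $t$ can exist via a parity/algebraic obstruction.

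Construction. Set $m := \floor{t/(3r)}$ and $s := \floor{t/m} \geq 3r$, and partition each vertex class $V_i$ (with $|V_i| = t$) into $m$ blocks $W_i^1, \dots, W_i^m$ of size $s$ (absorbing any leftover vertices into one block, which wastes only constantly many vertices). Colours are indexed by $\mathbf{c} = (c_1, \dots, c_r) \in [m]^r$, giving $n = m^r \leq \floor{(t/3r)^r}$ matchings; one trims to exactly $\floor{(t/3r)^r}$ if needed. For each colour $\mathbf{c}$ and each shift $k \in \mathbb{Z}_m$, place a fixed ``reference'' perfect matching $P$ (on a generic $r$-tuple of blocks, all of size $s$) on the diagonal cell $W_1^{c_1 + k} \cup \cdots \cup W_r^{c_r + k}$ (indices mod $m$), identifying each block with $[s]$ through fixed bijections. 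The matching $M_{\mathbf{c}}$ is the union over $k \in \mathbb{Z}_m$ of these $m$ copies of $P$, so $|M_{\mathbf{c}}| = ms = t$, as required.

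Main argument. Suppose towards a contradiction that $e_1, \dots, e_t$ is a rainbow matching with $e_j \in M_{\mathbf{c}^{(j)}}$ and the $\mathbf{c}^{(j)}$ pairwise distinct. Each $e_j$ lies in some cell $k_j \in \mathbb{Z}_m$ of its matching, so its endpoint in $V_i$ lies in block $c_i^{(j)} + k_j \pmod{m}$. Because $\{e_j\}_{j=1}^t$ is a perfect matching of $V$, a pigeonhole count shows that for every coordinate $i \in [r]$ and every residue $b \in \mathbb{Z}_m$, exactly $s$ indices $j$ satisfy $c_i^{(j)} + k_j \equiv b \pmod{m}$; equivalently, the multiset $\{c_i^{(j)} + k_j\}_j$ hits each residue of $\mathbb{Z}_m$ exactly $s$ times, for every $i$. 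The heart of the argument is to design the reference matching $P$ so that it carries a sign/parity invariant whose total over any would-be rainbow matching is forced to one value by these block-balance identities but to a different value by the distinctness of the $t$ colours $\mathbf{c}^{(1)}, \dots, \mathbf{c}^{(t)} \in [m]^r$, giving the contradiction.

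Main obstacle. The technical crux is producing an invariant that propagates through all $r$ coordinates simultaneously, so that the two evaluations are genuinely incompatible. The template is the classical $r = 2$ case: the Cayley table of $\mathbb{Z}_t$ for even $t$ admits no transversal by a parity count on $\mathbb{Z}_t$. Generalising this to $r \geq 3$ is delicate because the extra degrees of freedom (the shifts $k_j$ and the freedom of matching within $P$) can a priori be used to flip the parity, and the bound $n \leq (t/3r)^r$ encodes exactly how much room inside each block is needed to force the invariant to propagate. I would follow the strategy of Pohoata, Sauermann and Zakharov~\cite{psz}, whose Theorem~3.1 implements precisely this sign-invariant argument at the promised parameters; the scheme above is intended as a skeleton that their construction fleshes out.
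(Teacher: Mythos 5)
The key point is that this statement is not proved in the paper at all: it is an imported result, and the paper's entire ``proof'' consists of citing \cite[Theorem 3.1]{psz} and observing that $\floor{(t/3r)^r}\le(\floor{t/r}-1)^r$ whenever $t\ge 3r$, so that the restated parameters follow from the original. Your proposal instead tries to rebuild the construction from scratch, but the one step that carries all of the content is missing: you never define the ``sign/parity invariant'' on the reference matching $P$, never state what the two incompatible evaluations are, and you explicitly end by saying you would follow the strategy of \cite{psz}. Everything you do write down (blocks, diagonal shifts indexed by $\mathbb{Z}_m$, the residue-balance count coming from the fact that a size-$t$ rainbow matching is a perfect matching) is routine and does not by itself exclude a rainbow matching of size $t$; for a generic choice of $P$ such a diagonal-shift construction does admit full rainbow matchings, and the extra freedom you yourself flag (the shifts $k_j$ and the choice of edge inside each cell) is exactly what a genuine argument must control. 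So the proposal is a skeleton plus a citation, not a proof; at that point the honest (and shorter) route is the paper's: quote \cite[Theorem 3.1]{psz} verbatim and do the one-line comparison of parameters.

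There are also concrete parameter problems even in the skeleton. With $m=\floor{t/(3r)}$ you obtain only $m^r=\floor{t/(3r)}^r$ colours, which can be far below the required $n=\floor{(t/3r)^r}$: for $r=3$, $t=23$ you get $m^r=8$ while $n=16$. ``Trimming'' only decreases the number of matchings, and padding with duplicate matchings is not harmless, since a rainbow matching may take one edge from each of two duplicate indices, so duplicates make rainbow matchings easier, not harder. Similarly, ``absorbing leftover vertices into one block'' is incompatible with your own claims: the enlarged block cannot carry a copy of the size-$s$ reference matching, the identity $|M_{\mathbf{c}}|=ms=t$ fails, and the exact statement ``each residue is hit exactly $s$ times'' (which needs all blocks to have size exactly $s$ and each matching to cover all $tr$ vertices) breaks down. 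These bookkeeping issues could plausibly be patched, but the missing invariant cannot be waved away: constructing it at the density $n\approx(t/3r)^r$ is precisely the theorem of \cite{psz}.
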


Our proof of Theorem \ref{theo:upper} also uses the following lemma.

\begin{lemma}\label{lemma:blowup}
Suppose for each $i\in[q]$ there exists a collection of $n$ matchings of size $t_i$ in an $r$-partite $r$-uniform hypergraph $H_i$ that does not admit a rainbow matching of size $t_i$. Then, there exists a collection of $n$ matchings of size $\sum_{i=1}^qt_i$ in an $r$-uniform $r$-partite hypergraph $H$ that does not contain a rainbow matching of size $\sum_{i=1}^qt_i-q+1$.
\end{lemma}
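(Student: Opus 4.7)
The plan is to construct $H$ as the disjoint union of the given hypergraphs $H_1,\dots,H_q$, with the colouring chosen so that the $j$-th matching of $H$ is obtained by concatenating the $j$-th matchings across all $H_i$. More precisely, I would first take vertex-disjoint copies of the $H_i$ (which we may do without loss of generality) and, if each $H_i$ has $r$-partition $V^i_1\cup\cdots\cup V^i_r$, define $H$ to be their union with $r$-partition $\bigcup_i V^i_k$ for $k\in[r]$, so that $H$ remains $r$-partite and $r$-uniform. Writing $M^i_1,\dots,M^i_n$ for the given matchings in $H_i$, I would then set $M_j:=M^1_j\cup\cdots\cup M^q_j$ for each $j\in[n]$. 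Since the $H_i$ have pairwise disjoint vertex sets, each $M_j$ is genuinely a matching of size $\sum_{i=1}^q t_i$, as required.

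The core of the argument is then to bound from above the size of any rainbow matching $M$ in $H$. The key observation is that $M$ decomposes canonically as $M=\bigsqcup_{i=1}^q M\cap H_i$, and for each $i$ the set $M\cap H_i$ is itself a rainbow sub-collection of $M^i_1,\dots,M^i_n$, because the colours of edges of $M$ are all distinct in $[n]$. By the assumed property of $H_i$, no rainbow matching of $M^i_1,\dots,M^i_n$ reaches size $t_i$, so $|M\cap H_i|\leq t_i-1$. Summing over $i$ gives
\[
|M|=\sum_{i=1}^q |M\cap H_i|\leq \sum_{i=1}^q (t_i-1)=\sum_{i=1}^q t_i-q,
\]
which is strictly less than $\sum_{i=1}^q t_i-q+1$. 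Hence $H$ together with $M_1,\dots,M_n$ contains no rainbow matching of the forbidden size.

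There is essentially no obstacle to overcome here: the construction is a straightforward disjoint-union blow-up, and the counting step is immediate from the fact that a rainbow matching in a disjoint union uses each colour at most once globally, so its intersection with each $H_i$ is automatically rainbow in $H_i$'s system. The only item worth being careful about is preserving the $r$-partite structure when taking the union, which is handled by grouping the $k$-th vertex classes together.
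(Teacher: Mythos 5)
Your proposal is correct and follows essentially the same route as the paper: take the vertex-disjoint union of the $H_i$ (grouping corresponding parts to keep $r$-partiteness), let each colour class be the union of the corresponding matchings, and observe that any rainbow matching restricts to a rainbow matching in each $H_i$, hence has size at most $\sum_{i=1}^q(t_i-1)$. No issues.
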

\begin{proof}
Let $H$ be the $r$-partite $r$-uniform hypergraph formed by taking the disjoint union of the hypergraphs $H_1,\ldots,H_q$. Define $n$ matchings $M_1,\ldots,M_n$ of size $n$ on $H$ so that for each $i\in[q]$, the restriction of $M_1,\ldots,M_n$ on $H_i$ is exactly the collection of $n$ matchings of size $t_i$ on $H_i$ that does not have a rainbow matching of size $t_i$. Then, the maximum size of a rainbow matching for $M_1,\ldots,M_n$ is at most $\sum_{i=1}^q(t_i-1)=\sum_{i=1}^qt_i-q$.
\end{proof}

Combining these two results allows us to prove Theorem \ref{theo:upper}.

\begin{proof}[Proof of Theorem \ref{theo:upper}]
For every $n>6^r$, there exists a unique $a\geq6$ such that $a^r<n\leq(a+1)^r$. Set $t=3(a+1)r$. Let $n=tq+s$, where $0\leq s<t$. Note that $q=\floor{\frac nt}=\floor{\frac n{3(a+1)r}}\geq\frac n{6(a+1)r}\geq\frac1{12r}n^{\frac{r-1}r}$, as $\frac n{3(a+1)r}>\frac{a^r}{3(a+1)r}>\frac{a^{r-1}}{6r}\geq2$ and $n^{\frac1r}>a>\frac{a+1}2$. 

Apply Theorem \ref{theo:psz} with $t=3(a+1)r$ and using $n\leq(a+1)^r=\floor{(t/3r)^r}$, we obtain a collection of $n$ matchings of size $t$ in an $r$-uniform $r$-partite hypergraph $H$ that does not have a rainbow matching of size $t$. Let $t'=s+t=n-(q-1)t$, then $t'\geq t$ implies that $n\leq\floor{(t'/3r)^r}$, so by Theorem \ref{theo:psz}, there exists a collection of $n$ matchings of size $t'$ in an $r$-uniform $r$-partite hypergraph $H'$ that does not have a rainbow matching of size $t'$. Apply Lemma \ref{lemma:blowup} with $t_1=\ldots=t_{q-1}=t$ and $t_q=t'$, we get a collection of $n$ matchings of size $\sum_{i=1}^qt_i=n$ that does not contain a rainbow matching of size $\sum_{i=1}^qt_i-q+1=n-q+1$. It follows from the definition that $g(r,n)\leq n-q\leq n-\frac1{12r}n^{\frac{r-1}r}$.
\end{proof}

\subsection{Proof of Theorem \ref{theo:strong}}\label{sec_hrn}

Firstly, we prove the lower bound of Theorem \ref{theo:strong}.
This follows easily from Theorem \ref{theo:upper}.

\begin{theo}\label{theo:strongupper}
For every $r\geq 3$ and $n>6^r$, there exists a collection of $n$ matchings, each of size $n+\frac1{12r}n^{\frac{r-1}r}-1$, in an $r$-partite $r$-uniform hypergraph that does not admit a rainbow matching of size $n$.
\end{theo}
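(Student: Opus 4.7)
The strategy is to pad the construction from Theorem \ref{theo:upper} with dummy edges. Set $k:=\ceil{\frac{1}{12r}n^{\frac{r-1}{r}}}$. First I would invoke Theorem \ref{theo:upper} (valid since $n>6^r$) to obtain an $r$-partite $r$-uniform hypergraph $H$ together with a collection of $n$ matchings $M_1,\ldots,M_n$ of size $n$ in $H$ whose maximum rainbow matching has size at most $n-k$.

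Next, I would enlarge $H$ by introducing $k-1$ new edges $f_1,\ldots,f_{k-1}$ on brand-new vertices, placing one fresh vertex of each $f_j$ in each of the $r$ parts, so that the resulting hypergraph $H^+$ remains $r$-partite and the $f_j$'s are pairwise vertex-disjoint and share no vertex with any edge of $H$. For each $i\in[n]$, define $M_i':=M_i\cup\{f_1,\ldots,f_{k-1}\}$; then each $M_i'$ is a matching in $H^+$ of size $n+k-1$, which is the desired matching size $n+\frac{1}{12r}n^{\frac{r-1}{r}}-1$ up to the integer part of $\frac{1}{12r}n^{\frac{r-1}{r}}$.

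Finally, suppose for contradiction that $R$ were a rainbow matching of size $n$ for the collection $(M_1',\ldots,M_n')$. Since the $k-1$ dummy edges are pairwise vertex-disjoint, $R$ can contain at most $k-1$ of them, so at least $n-(k-1)=n-k+1$ edges of $R$ lie in $H$. These edges are pairwise disjoint and, since $R$ is rainbow, they receive $n-k+1$ distinct colours from $[n]$ in the required way, so they form a rainbow matching for $M_1,\ldots,M_n$ of size strictly greater than $n-k$, contradicting Theorem \ref{theo:upper}. The argument is essentially mechanical and I do not anticipate any substantive obstacle; the only points needing care are preserving $r$-partiteness when adding the $f_j$'s and tracking the ceiling/floor bookkeeping so as to land exactly on the stated size.
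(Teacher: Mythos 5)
Your proposal is correct and follows essentially the same route as the paper: invoke Theorem \ref{theo:upper}, pad each matching with pairwise disjoint dummy edges on fresh vertices (keeping $r$-partiteness), and observe that a size-$n$ rainbow matching would leave too large a rainbow matching inside the original construction. Your handling of the integrality of the number of dummy edges via the ceiling is, if anything, slightly more careful than the paper's.
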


\begin{proof}[Proof of Theorem \ref{theo:strongupper}]
Let $m:=\frac{1}{12r}n^{\frac{r-1}{r}}-1$.
By Theorem \ref{theo:upper}, there exists a collection $\mathcal M=\{M_1,\dots,M_n\}$ of $n$ matchings of size $n$ in an $r$-uniform $r$-partite graph $\mathcal H$ which does not contain a rainbow matching of size $n-m$.  
Let $\mathcal H'$ be the $r$-uniform hypergraph obtained by adding a set $S$ of $rm$ new vertices to $\mathcal H$ and $m$ new edges $e_1,\dots,e_m$ which are pairwise vertex-disjoint and lie in $S$.
It is easy to see that $\mathcal H'$ is $r$-partite.
Consider the collection $\mathcal M'=\{M_1',\dots,M_n'\}$ of $n$ matchings of size $n+m$ in $\mathcal H'$ where $M_i'=M_i\cup\{e_1,\dots,e_m\}$ for every $i\in[n]$.
Suppose for a contradiction that $\mathcal M'$ yields a rainbow matching of size $n$.
By removing the edges $\{e_1,\dots,e_m\}$ from such matching, we obtain a matching of size at least $n-m$ which is rainbow with respect to $\mathcal M$, a contradiction.
Therefore, $\mathcal M'$ does not contain a rainbow matching of size $n$, which concludes the proof.
\end{proof}

To prove the upper bound in Theorem \ref{theo:strong}, we proceed in two steps.
We first use Lemma \ref{lem:gibounds} to prove a weak asymptotic version, namely, that every collection of $n$ matchings of size roughly $(r+1)n/2$ admits a rainbow matching of size $n-o(n)$.
We then employ the sampling trick of Munh\'{a}-Correia, Pokrovskiy and Sudakov to obtain the upper bound in Theorem \ref{theo:strong}.
We start by proving the weak asymptotic result.

\begin{lemma}\label{lem:stronglower}
For every $r\geq3$, every collection of $n$ matchings of size $\ceil{\frac{(r+1)n}2}$ in an $r$-uniform hypergraph contains a rainbow matching of size $n-2^r\sqrt n$. 
\end{lemma}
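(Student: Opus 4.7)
The plan is to deduce the weak asymptotic Lemma~\ref{lem:stronglower} as an essentially immediate corollary of Lemma~\ref{lem:gibounds}. Let $\mathcal{H}=\{H_1,\dots,H_n\}$ be a collection of $n$ matchings of size $N=\lceil (r+1)n/2\rceil$ in an $r$-uniform hypergraph, and let $m$ denote the size of a maximum rainbow matching. The goal is to upper bound $n-m$ by $2^r\sqrt{n}$.

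First I would apply Lemma~\ref{lem:gibounds} directly, giving
\[
(n-m)\cdot\frac{2N-(r+1)m}{r-1}\leq\tfrac{1}{2}\binom{2r}{r}m.
\]
Since $2N\geq (r+1)n$ by the choice of $N$, we have $2N-(r+1)m\geq (r+1)(n-m)$, which is nonnegative because $m\leq n$. Substituting into the previous inequality and using $m\leq n$ on the right-hand side yields
\[
\frac{(r+1)(n-m)^{2}}{r-1}\leq \tfrac{1}{2}\binom{2r}{r}\,n.
\]

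Rearranging gives the clean bound
\[
(n-m)^{2}\leq \frac{r-1}{2(r+1)}\binom{2r}{r}n.
\]
The final step is a purely numerical estimate: using the crude bounds $\binom{2r}{r}<4^r$ and $\frac{r-1}{2(r+1)}<\tfrac12$, we get $(n-m)^2<\tfrac{4^r}{2}n$, hence $n-m<\frac{2^r}{\sqrt{2}}\sqrt{n}<2^r\sqrt{n}$, as required.

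There is no real obstacle here beyond checking that the constant $2^r$ actually absorbs all the factors; the only thing to be slightly careful about is justifying that the factor $2N-(r+1)m$ is nonnegative (so that the inequality from Lemma~\ref{lem:gibounds} can be strengthened rather than reversed), which follows from $m\leq n\leq 2N/(r+1)$. The heavier work for Theorem~\ref{theo:strong} is then to promote this weak asymptotic to a strong one of the form $h'(r,n)\leq\tfrac{(r+1)n}{2}+O_r(n^{(2r-1)/(2r)})$ via the sampling trick of Munhá-Correia, Pokrovskiy and Sudakov, but that is a separate step carried out after Lemma~\ref{lem:stronglower} is in hand.
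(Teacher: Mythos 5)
Your proof is correct and takes essentially the same route as the paper: both deduce the lemma directly from Lemma~\ref{lem:gibounds} via the observation that $2N-(r+1)m\ge (r+1)(n-m)\ge 0$. The only difference is in the endgame — you bound $m\le n$ on the right-hand side and read off $(n-m)^2\le \frac{r-1}{2(r+1)}\binom{2r}{r}n$ directly, whereas the paper keeps $m$ and solves the resulting quadratic exactly; your shortcut is slightly cleaner and the constants still comfortably fit under $2^r\sqrt{n}$.
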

\begin{proof}
Let $\mathcal{H}=\{H_1, \ldots, H_n\}$ be a collection of $n$ matchings of size $\ceil{\frac{(r+1)n}2}$ in an $r$-uniform hypergraph. 
Let $M$ be a maximum rainbow matching for $\mathcal{H}$ which has size $m$. By Lemma \ref{lem:gibounds},
\[\frac{(r+1)(n-m)^2}{r-1}\leq(n-m)\frac{2\ceil{\frac{(r+1)n}2}-(r+1)m}{r-1}\leq\sum_{i>m} g_i \leq \frac12\binom{2r}{r}m.\]
After simplification, we get
\[(r+1)m^2-\left(2(r+1)n+\frac12(r-1)\binom{2r}{r}\right)m+(r+1)n^2\leq 0.\]
Solving this gives
\begin{align*}
m&\geq\frac{2(r+1)n+\frac12(r-1)\binom{2r}{r}-\sqrt{\left(2(r+1)n+\frac12(r-1)\binom{2r}{r}\right)^2-4(r+1)^2n^2}}{2(r+1)}\\
&=n-\frac{\sqrt{2(r^2-1)\binom{2r}{r}n+\frac14(r-1)^2\binom{2r}{r}^2}-\frac12(r-1)\binom{2r}{r}}{2(r+1)}\\
&=n-\frac{2(r^2-1)\binom{2r}{r}n}{2(r+1)\left(\sqrt{2(r^2-1)\binom{2r}{r}n+\frac14(r-1)^2\binom{2r}{r}^2}+\frac12(r-1)\binom{2r}{r}\right)}\\
&\geq n-\frac{(r-1)\binom{2r}{r}n}{\sqrt{2(r^2-1)\binom{2r}{r}n}}=n-\sqrt{\frac{(r-1)\binom{2r}r}{2(r+1)}}\sqrt n\geq n-2^r\sqrt n,
\end{align*}
completing the proof.
\end{proof}

For the sampling trick, we use the following version of Chernoff's bound. 
\begin{lemma}[Chernoff's bound, Corollary 2.3 in \cite{jlr}]\label{lem:chernoff}
Let $X\sim B(n,p)$ be a binomial random variable, then for every $0<\varepsilon<1$, 
\[\mathbb{P}\left(|X-\mathbb{E}X|\geq\varepsilon\mathbb{E}X\right)\leq2\exp\left(-\frac{\varepsilon^2}3\mathbb{E}X\right).\]
\end{lemma}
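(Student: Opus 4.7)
The plan is to use the standard Chernoff method: combine the exponential Markov inequality with the explicit moment generating function of a binomial variable and then optimize. First I would write $X=\sum_{i=1}^n X_i$ as a sum of independent Bernoulli$(p)$ variables, set $\mu:=\mathbb{E}X=np$, and compute
\[\mathbb{E}[e^{tX}]=\prod_{i=1}^n\mathbb{E}[e^{tX_i}]=(1-p+pe^t)^n\leq\exp(np(e^t-1))=\exp(\mu(e^t-1)),\]
where the inequality uses $1+x\le e^x$ applied with $x=p(e^t-1)$.

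For the upper tail I would fix $t>0$ and apply Markov's inequality to $e^{tX}$ to obtain
\[\mathbb{P}(X\geq(1+\varepsilon)\mu)\leq e^{-t(1+\varepsilon)\mu}\,\mathbb{E}[e^{tX}]\leq\exp\!\left(\mu\big(e^t-1-t(1+\varepsilon)\big)\right).\]
Differentiating in $t$ shows that the optimal choice is $t=\ln(1+\varepsilon)>0$, which yields
\[\mathbb{P}(X\geq(1+\varepsilon)\mu)\leq\exp\!\left(-\mu\big[(1+\varepsilon)\ln(1+\varepsilon)-\varepsilon\big]\right).\]
Symmetrically, applying Markov's inequality to $e^{-tX}$ with $t>0$ and optimizing at $t=-\ln(1-\varepsilon)>0$ gives the lower tail bound
\[\mathbb{P}(X\leq(1-\varepsilon)\mu)\leq\exp\!\left(-\mu\big[(1-\varepsilon)\ln(1-\varepsilon)+\varepsilon\big]\right).\]

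The final step is to simplify these log terms into the clean form $\exp(-\varepsilon^2\mu/3)$. Here I would verify the two elementary inequalities
\[(1+\varepsilon)\ln(1+\varepsilon)-\varepsilon\geq\frac{\varepsilon^2}{3}\quad\text{and}\quad(1-\varepsilon)\ln(1-\varepsilon)+\varepsilon\geq\frac{\varepsilon^2}{3}\]
valid for $0<\varepsilon<1$. Both follow by considering the function $\varphi(\varepsilon):=(1+\varepsilon)\ln(1+\varepsilon)-\varepsilon-\varepsilon^2/3$ (and its lower-tail analogue), checking that $\varphi(0)=\varphi'(0)=0$, and showing $\varphi''(\varepsilon)\ge0$ on $(0,1)$ from the Taylor expansion of $\ln(1\pm\varepsilon)$; the $/3$ denominator (rather than $/2$) is exactly what makes the upper-tail inequality hold up to $\varepsilon=1$. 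A union bound over the two tails then produces the factor $2$ in front.

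The main obstacle is the last calibration step: the lower tail naturally gives $\varepsilon^2/2$, but the upper tail is tighter only than $\varepsilon^2/(2+\varepsilon)$, and verifying that $\varepsilon^2/(2+\varepsilon)\ge\varepsilon^2/3$ on $(0,1)$ is what forces the denominator $3$ in the final statement. Everything else reduces to the standard MGF-plus-optimization template.
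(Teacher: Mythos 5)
The paper does not prove this lemma at all: it is quoted verbatim as Corollary 2.3 of the random graphs textbook \cite{jlr} and used as a black box, so there is no internal proof to compare against. Your exponential-moment (Chernoff) derivation is the standard textbook argument, and its overall structure is correct: the MGF bound $\mathbb{E}[e^{tX}]\le\exp(\mu(e^t-1))$, the optimised upper- and lower-tail bounds $\exp(-\mu[(1+\varepsilon)\ln(1+\varepsilon)-\varepsilon])$ and $\exp(-\mu[(1-\varepsilon)\ln(1-\varepsilon)+\varepsilon])$, the calibration of both exponents against $\varepsilon^2\mu/3$ on $(0,1)$, and the union bound giving the factor $2$ are all exactly how this statement is proved in the literature.

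One step, as written, would fail: for $\varphi(\varepsilon)=(1+\varepsilon)\ln(1+\varepsilon)-\varepsilon-\varepsilon^2/3$ you claim $\varphi''\ge0$ on $(0,1)$, but $\varphi''(\varepsilon)=\frac{1}{1+\varepsilon}-\frac23$, which is negative for $\varepsilon>\frac12$, so convexity does not hold on the whole interval. The inequality $(1+\varepsilon)\ln(1+\varepsilon)-\varepsilon\ge\varepsilon^2/3$ is nonetheless true for $0<\varepsilon\le1$ (indeed up to $\varepsilon=3/2$), and the repair is easy: either check that $\varphi'(\varepsilon)=\ln(1+\varepsilon)-\tfrac23\varepsilon$ is nonnegative on $(0,1]$ (it vanishes at $0$, increases up to $\varepsilon=\tfrac12$, and is still positive at $\varepsilon=1$ since $\ln 2>\tfrac23$), so $\varphi$ is increasing and hence nonnegative; or use the standard bound $(1+\varepsilon)\ln(1+\varepsilon)-\varepsilon\ge\frac{\varepsilon^2}{2(1+\varepsilon/3)}\ge\frac{\varepsilon^2}{3}$ for $\varepsilon\le\frac32$, which is essentially the observation you make in your final paragraph. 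The lower-tail calibration via $(1-\varepsilon)\ln(1-\varepsilon)+\varepsilon\ge\varepsilon^2/2$ is fine. With that one verification corrected, your proof is complete.
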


We are now able to apply the sampling trick of Munh\'{a}-Correia, Pokrovskiy and Sudakov to obtain the upper bound on $h'(r,n)$ given in Theorem \ref{theo:strong}.

\begin{theo}\label{thm:stronglower}
For every $r\geq3$ and $n$ sufficiently large, every collection of $n$ matchings of size at least $\frac{(r+1)n}2+3r^2n^{\frac{2r-1}{2r}}$ in an $r$-uniform hypergraph contains a rainbow matching of size $n$. 
\end{theo}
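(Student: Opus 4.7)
The plan is to deduce Theorem \ref{thm:stronglower} from the weak asymptotic bound Lemma \ref{lem:stronglower} via the sampling trick of Munh\'a-Correia, Pokrovskiy and Sudakov. Let $\{H_1,\dots,H_n\}$ be a collection of $n$ matchings of size at least $s := \frac{(r+1)n}{2}+3r^2n^{\frac{2r-1}{2r}}$ in an $r$-uniform hypergraph; the goal is to construct a rainbow matching of size~$n$.

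The first step is a random split. For each $i\in[n]$ independently, I include each edge of $H_i$ in a ``core'' $H_i^{\ast}$ with probability $p$, and put the remaining edges in a ``reservoir'' $R_i$; the parameter $p$ is chosen slightly above $\lceil(r+1)n/2\rceil/s$ so that $\mathbb{E}|H_i^{\ast}|$ exceeds $\lceil(r+1)n/2\rceil$ by $\Theta(\sqrt{n\log n})$, while $\mathbb{E}|R_i|\geq 2r^2n^{\frac{2r-1}{2r}}$. Applying Chernoff's inequality (Lemma \ref{lem:chernoff}) together with a union bound over $i\in[n]$, with positive probability we have $|H_i^{\ast}|\geq \lceil(r+1)n/2\rceil$ and $|R_i|\geq r^2n^{\frac{2r-1}{2r}}$ simultaneously for every $i$. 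Fix such a sample.

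The second step applies Lemma \ref{lem:stronglower} to the cores $\{H_i^{\ast}\}_{i\in[n]}$, producing a rainbow matching $M$ of size at least $n-2^r\sqrt{n}$ that uses colours in some set $C\subseteq[n]$ with $|[n]\setminus C|\leq 2^r\sqrt{n}$. I then extend $M$ to a rainbow matching of size $n$ by a greedy procedure on the unused colours $U=[n]\setminus C$: for each $c\in U$ in turn, I pick an edge of $R_c$ disjoint from the current rainbow matching and adjoin it. The feasibility of this procedure reduces to the claim that, at each step, the reservoir $R_c$ contains an edge disjoint from the at most $rn$ vertices that are currently covered.

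The main obstacle is establishing this last claim, since the reservoirs $\{R_c\}_{c\in U}$ and the matching $M$ are not independent. The MCPS-style argument handles this by exploiting the independence of the sampling across distinct matchings: for each fixed $c\in U$, conditioning on $\{H_i^{\ast}\}_{i\neq c}$ leaves $R_c$ as a random subset of $H_c$, and a Chernoff bound applied in this conditional setting controls the number of edges of $R_c$ meeting $V(M)$ together with the vertices used by extensions so far. The slack $3r^2n^{\frac{2r-1}{2r}}$ in the hypothesis is calibrated precisely so that, with high probability, this count of bad edges in $R_c$ is strictly less than $|R_c|$, even after union-bounding over the at most $2^r\sqrt{n}$ greedy extension steps and balancing against the concentration error terms from Chernoff. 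Putting these pieces together produces a rainbow matching of size~$n$.
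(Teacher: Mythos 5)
There is a genuine gap, and it lies exactly where you flag the ``main obstacle'': your sampling is done on \emph{edges}, whereas the trick of Munh\'a-Correia, Pokrovskiy and Sudakov (and the paper's proof) samples \emph{vertices}. The paper takes a random set $S\subseteq V(H)$ with each vertex included independently with probability $p=4n^{-1/(2r)}$, declares the core of $H_i$ to be the edges with no vertex in $S$ and the reservoir to be the edges entirely inside $S$. Then the rainbow matching $M$ produced by Lemma \ref{lem:stronglower} lives in $H-S$ and is therefore \emph{automatically} vertex-disjoint from every reservoir edge; the greedy extension only has to avoid the $\le r(2^r\sqrt n-1)$ vertices used by earlier extension edges, which is trivial since each unused colour has at least $r2^r\sqrt n$ reservoir edges forming a matching. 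Your edge-partition $H_i = H_i^\ast\cup R_i$ provides no such separation, and the conditional Chernoff argument you sketch cannot repair it, because the failure mode is not probabilistic: $M$ covers up to $rm\le rn$ vertices, and since $H_c$ is a matching, the number of edges of $H_c$ meeting $V(M)$ is at most $rn$ but can be as large as $|H_c|=\frac{(r+1)n}{2}+3r^2n^{\frac{2r-1}{2r}}$, which is \emph{smaller} than $rn$ for $r\ge 3$ and $n$ large. So it is entirely possible that \emph{every} edge of $H_c$ meets $V(M)$, in which case no random choice of $R_c\subseteq H_c$ contains a usable edge, no matter how large $|R_c|$ is. (There is also a secondary issue: $M$ and the set of unused colours depend on $H_c^\ast$ as well, so $R_c$ is not independent of $V(M)$ even after conditioning on $\{H_i^\ast\}_{i\ne c}$; but the counting problem above is the fatal one.)

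The fix is simply to switch to vertex sampling as in the paper: choose $S$ randomly in $V(H)$, use Chernoff plus a union bound to ensure every $M_i$ has at least $\frac{(r+1)n}{2}$ edges avoiding $S$ and at least $r2^r\sqrt n$ edges inside $S$, apply Lemma \ref{lem:stronglower} to the parts outside $S$, and greedily complete with edges inside $S$. Your overall architecture (weak asymptotic lemma, random split, greedy completion) is the right one; only the object being sampled is wrong, and that choice is what makes the completion step work.
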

\begin{proof}[Proof of Theorem \ref{thm:stronglower}]
Let $M_1, \ldots, M_n$ be the $n$ matchings and let $H$ be the $r$-uniform hypergraph. Let $S\subset V(H)$ be a random set obtained by including every vertex independently with probability $p=4n^{-\frac1{2r}}$. For each $i\in[n]$, the expected number of edges in $M_i$ with all of their vertices in $S$ is 
\[p^r\left(\frac{(r+1)n}2+3r^2n^{\frac{2r-1}{2r}}\right)\geq r2^{r+1}\sqrt{n}.\]
Thus, by Lemma \ref{lem:chernoff} and a union bound, for all sufficiently large $n$, the probability that there exists $i\in[n]$ such that less than $r2^r\sqrt n$ edges of $M_i$ have all of their vertices in $S$ is at most 
\[2n\exp\left(-\frac1{12}r2^{r+1}\sqrt{n}\right)\leq\frac13.\]

Similarly, for every $i\in[n]$, the expected number of edges in $M_i$ with none of their vertices in $S$ is 
\[(1-p)^r\left(\frac{(r+1)n}2+3r^2n^{\frac{2r-1}{2r}}\right)\geq(1-pr)\left(\frac{(r+1)n}2+3r^2n^{\frac{2r-1}{2r}}\right)\geq\frac{(r+1)n}2+\frac {r^2}4n^{\frac{2r-1}{2r}},\]
provided that $n$ is large. Thus, by applying Lemma \ref{lem:chernoff} with $\varepsilon=n^{-\frac{1}{2r}}$ and using a union bound, we see that for all sufficiently large $n$, the probability that there exists $i\in[n]$ such that fewer than $\frac{(r+1)n}2$ edges of $M_i$ have none of their vertices in $S$ is at most 
\[2n\exp\left(-\frac16(r+1)n^{\frac{r-1}r}\right)\leq\frac13.\]

Therefore, there is a realisation of $S$ such that for each $i\in[n]$, there are at least $r2^r\sqrt n$ edges in $M_i$ with all of their vertices in $S$, and at least $\frac{(r+1)n}2$ edges in $M_i$ with none of their vertices in $S$. By Lemma \ref{lem:stronglower}, there is a rainbow matching of size at least $n-2^r\sqrt n$ in $H-S$, which can be greedily extended to a rainbow matching of size $n$ by using edges in $S$. 
\end{proof}

\section{Concluding Remarks} \label{sec_conc}

We finish this note by including some details on the best bounds of several related problems.

The two problems we discuss above ask for the largest rainbow matching guaranteed in every collection of $n$ matchings of size $n$, and the smallest size we require each of $n$ matchings to have in order to obtain a rainbow matching of size $n$. A third naturally related question to consider is how many matchings of size $n$ are needed to ensure the existence of a rainbow matching of size $n$.

Let $f(r,n)$ denote the smallest number of matchings of size $n$ in an $r$-uniform $r$-partite hypergraph that guarantees a rainbow matching of size $n$, and let $F(r,n)$ denote the smallest number of matchings of size $n$ in an $r$-uniform hypergraph that ensures a rainbow matching of size $n$. Note that clearly $f(r,n) \leq F(r,n)$. For the $r$-partite version of this problem, Aharoni and Berger~\cite[Conjecture 1.2]{ab} conjectured that $f(r,n)=2^{r-1}(n-1)+1$. (In fact their conjecture as originally stated is slightly more general but we focus on this case.) They proved that the conjecture holds when $r=2$, but noted that this was essentially already proved by Drisko~\cite{drisko}, just in the less general setting of Latin rectangles. Concerning $F(2,n)$, it was conjectured in \cite{abchs} that $F(2,n)=2n$ for even $n$ and $F(2,n)=2n-1$ when $n$ is odd, which would be tight due to extremal constructions. They also proved an upper bound of $3n-2$ which has since been improved to $3n-3$ in \cite{abkk}. Regarding $f(r,n)$ and $F(r,n)$ for $r \geq 3$, Alon~\cite{alon} disproved Aharoni and Berger's conjecture for certain values of $r$ and $n$, including almost all cases with $r \geq 4$, and gave a superexponential upper bound on $F(r,n)$ for all $r$. Glebov, Sudakov and Sz\'{a}bo~\cite{gss} improved this upper bound to a polynomial in $n$. Recently Pohoata, Sauermann and Zakharov~\cite{psz} determined the exact order of the polynomial, showing that for all $r\geq 3$ there exist positive constants $c_r$ and $C_r$ such that $c_rn^r \leq f(r,n) \leq F(r,n) \leq C_rn^r$. It is their Lemma 3.1 proving the lower bound here on $f(r,n)$ that we used in Section \ref{sec_ub}. Note that, in this paper, we focused on fixing $r$ and letting $n$ grow. One can also consider letting $r$ grow, or considering $n$ to be fixed and there are some further results known in these directions given in the listed literature.

Recalling that the Ryser-Brualdi-Stein conjecture concerns only edge-disjoint matchings, it is natural to consider making progress on these related problems by restricting back to the case where the union of the matchings forms a simple graph. In this setting, Chakraborti and Loh~\cite{cl} showed that $(2+o(1))n$ edge-disjoint matchings of size $n$ always admit a rainbow matching of size $n$, giving further evidence towards the conjecture concerning $F(2,n)$.

We finish this note by making a formal conjecture concerning the asymptotics of $g(r,n)$, $g'(r,n)$, $h(r,n)$ and $h'(r,n)$.

\begin{conj}
    Let $r\geq2$ be fixed. Then for each $x \in \{g(r,n), g'(r,n), h(r,n), h'(r,n)\}$,
    $$\lim_{n \ra \infty} \frac{x}{n} = 1.$$
\end{conj}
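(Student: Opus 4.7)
The plan is to focus primarily on establishing the lower bound $g'(r,n) \geq n - o(n)$, since by the sampling trick of Munh\'{a}-Correia, Pokrovskiy and Sudakov (as deployed in the proof of Theorem \ref{thm:stronglower}), any such weak asymptotic result immediately upgrades to the strong asymptotic $h'(r,n) \leq n + o(n)$. Coupled with the trivial bounds $g(r,n), g'(r,n) \leq n$ and $h(r,n), h'(r,n) \geq n$, this single reduction handles all four quantities simultaneously.

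The first observation is that the approach via Lemma \ref{lem:gibounds} is structurally stuck at the factor $\tfrac{2}{r+1}$: both the good-edge count and the cross-intersection bound (Bollob\'{a}s) are essentially tight in the worst case, so a fundamentally different strategy is required. Following the graph-case developments cited in the introduction, I would pursue a random-plus-absorption approach: first, construct in advance a rainbow \emph{absorber} --- a small robust substructure that, given any small set of leftover colours, can incorporate them into a partial rainbow matching without disturbing the rest. Then build the bulk of the rainbow matching on the remaining colours via a semi-random (nibble) process, and invoke the absorber to finish.

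The main obstacle is constructing the absorber in the hypergraph regime. In graphs, absorbers for rainbow matchings are built by stringing together small rainbow gadgets each capable of swallowing a single leftover colour, with robustness coming from many disjoint copies. In $r$-uniform hypergraphs each edge intersects many more potential edges, so simultaneously maintaining disjointness and flexibility becomes substantially harder; moreover, one has essentially no structural hypothesis on the host hypergraph beyond the matching condition, so Latin-square-type pseudorandomness is unavailable. A secondary obstacle is that the nibble analysis for the bulk typically requires control on codegrees between the matchings $M_1, \dots, M_n$, and in a worst-case input these codegrees can be highly irregular; this would likely force a preprocessing step that either regularises the input or discards colours whose matchings heavily overlap with the eventual absorber.

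Once $g'(r,n) \geq n - o(n)$ is in hand, the argument of Theorem \ref{thm:stronglower} --- sample $S \subset V(H)$ at a carefully chosen density, apply the weak bound to $H - S$, then greedily complete using edges inside $S$ --- goes through essentially verbatim to yield $h'(r,n) \leq n + o(n)$. Combined with the trivial bounds this closes the conjecture for $r \geq 3$; for $r = 2$, the conjecture already follows from Montgomery's asymptotic resolution of the Ryser--Brualdi--Stein bound together with the same sampling trick.
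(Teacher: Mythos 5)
The statement you are addressing is an open conjecture: the paper states it in the concluding remarks precisely because its own results fall far short of it. The best lower bound the paper proves is $g'(r,n)\ge \frac{2n}{r+1}-O_r(1)$ (Theorem \ref{theo:lower}) and the best upper bound on $h'(r,n)$ is $\frac{(r+1)n}{2}+O_r(n^{1-1/(2r)})$ (Theorem \ref{theo:strong}); for $r\ge3$ these are bounded away from $n$ by a constant factor. Your proposal does not close this gap. Its entire weight rests on the claim that one can prove $g'(r,n)\ge n-o(n)$ via an absorption-plus-nibble strategy, but you never construct the absorber, never specify the gadgets, and never carry out the nibble analysis --- indeed you explicitly list the reasons such a construction is hard in the hypergraph setting (no structural hypotheses on the host, uncontrolled codegrees between the $M_i$, difficulty of maintaining disjointness of gadgets in $r$-uniform hypergraphs) without resolving any of them. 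A research plan that names its own obstacles is not a proof; the key lemma $g'(r,n)\ge n-o(n)$ for $r\ge3$ remains entirely unproved, and with it the conjecture.

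Two smaller points. First, the parts of your argument that are correct --- the reduction from the weak to the strong asymptotic via the sampling trick, and the trivial bounds $g'(r,n)\le g(r,n)\le n$ and $h'(r,n)\ge h(r,n)\ge n$ (the latter two needing a dummy-edge argument, which is standard) --- are genuinely the right framing, and they do reduce all four limits to the single statement $g'(r,n)\ge n-o(n)$. Second, your handling of $r=2$ is slightly misattributed: Montgomery's theorem concerns edge-disjoint matchings decomposing $K_{n,n}$ and does not apply to arbitrary collections of matchings; the $r=2$ case of the conjecture instead follows from the asymptotic results in Table \ref{table:1} (Bar\'at--Gy\'arf\'as--S\'ark\"ozy and Munh\'a-Correia--Pokrovskiy--Sudakov), which already cover general, not necessarily edge-disjoint, matchings in both the bipartite and non-bipartite settings.
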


\section{Acknowledgements}
This work was initiated while the authors were visiting the University of Warwick as part of a workshop organised by Richard Montgomery and funded by ERC Starting Grant 947978 in August 2024.
We are very grateful to the University of Warwick and the organisers for the support to be able to attend the event and the great working environment.


\begin{thebibliography}{99}

\bibitem{ab} R. Aharoni and E. Berger, {\it Rainbow matchings in 
$r$-partite $r$-graphs}, Electronic Journal of Combinatorics {\bf 16} (2009),  \#R119. 

\bibitem{abchs} R.~Aharoni, E.~Berger, M.~Chudnovsky, D.~Howard and P.~Seymour, {\it Large rainbow matchings in general graphs}, European Journal of Combinatorics {\bf 79} (2019), 222--227.

\bibitem{abcz} R.~Aharoni, E.~Berger, M.~Chudnovsky and S.~Zerbib, {\it Rainbow paths and large rainbow matchings}, Electronic Journal of Combinatorics {\bf 29} (2022) \#P1.10.

\bibitem{abkk} R.~Aharoni, J.~Briggs, J.~Kim and M.~Kim, {\it Badges and rainbow matchings}, Discrete Mathematics {\bf 344} (2021), 112363.

\bibitem{ach} R.~Aharoni, P.~Charbit and D.~Howard, {\it On a generalization of the Ryser-Brualdi-Stein Conjecture}, Journal of Graph Theory {\bf 78} (2015), 143--156.

\bibitem{alon} N.~Alon, {\it Multicolored matchings in hypergraphs}, Moscow Journal of Combinatorics and Number Theory {\bf 1} (2011), 3--10.

\bibitem{bgs} J.~Bar\'at, A.~Gy\'arf\'as and G.N.~S\'ark\"ozy, {\it Rainbow matchings in bipartite multigraphs}, Periodica Mathematica Hungarica {\bf 74} (2017), 108--111. 

\bibitem{bollobas} B.~Bollob\'{a}s, {\it On generalized graphs}, Acta Mathematica Academiae Scientiarum Hungarica {\bf 16} (1965), 447--452. 

\bibitem{cl} D.~Chakraborti, P.-S.~Loh, {\it Large rainbow matchings in edge-colored graphs}, arXiv:2011.04650.


\bibitem{mcps} 
D.M.~Correia, A.~Pokrovskiy and B.~Sudakov, {\it Short proofs of rainbow matchings results}, International Mathematics Research Notices {\bf 2023} (2023), 12441--12476.

\bibitem{cst} D.M. Correia, B. Sudakov and I. Tomon, {\it Flattening rank and its combinatorial applications}, Linear Algebra and its Applications
{\bf 625} (2021), 113--125.

\bibitem{drisko} A.~Drisko, {\it Transversals in row-Latin rectangles}, Journal of Combinatorial Theory, Series A {\bf 84} (1998), 181--195.

\bibitem{grww} P.~Gao, R.~Ramadurai, I.M.~Wanless and N.~Wormald, {\it Full rainbow matchings in graphs and hypergraphs}, Combinatorics, Probability \& Computing {\bf 30} (2021), 762--780.

\bibitem{gss} R. Glebov, B. Sudakov and T. Szab\'o, {\it How many colors guarantee a rainbow matching?}, Electronic Journal of Combinatorics {\bf 21} (2014), \#P1.27. 

\bibitem{jlr} S. Janson, T. \L uczak, and A. Ruci\'{n}ski. {\it Random graphs}. Wiley-Interscience Series in Discrete Mathematics and Optimization. Wiley-Interscience, New York, 2000.

\bibitem{mont} R. Montgomery, {\it A proof of the Ryser-Brualdi-Stein conjecture for large even~$n$}, arXiv:2310.19779.

\bibitem{psz} C.~Pohoata, L.~Sauermann and D.~Zakharov, {\it Sharp bounds for rainbow matchings in hypergraphs}, arXiv:2212.07580.

\bibitem{p} A. Pokrovskiy, {\it An approximate version of a conjecture of Aharoni and Berger}, Advances in Mathematics {\bf 333} (2018), 1197--1241.

\end{thebibliography}
\end{document}